\numberwithin{equation}{section}
\newtheorem{thm}{Theorem}[section]
\newtheorem{cor}[thm]{Corollary}
\newtheorem{lem}[thm]{Lemma}
\newtheorem{defn}[thm]{Definition}
\theoremstyle{definition}
\newtheorem{rmk}[thm]{Remark}
\newenvironment{pf}[1][]{%
	\vskip 3mm
	\noindent
	\ifthenelse{\equal{#1}{}}%
	{{\slshape Proof. }}%
	{{\slshape #1.} }%
}%
{\qed\bigskip}
\newcounter{alphabet}
\newcounter{tmp}
\newcommand{\bysame}{\leavevmode\hbox to3em{\hrulefill}\,}
\title[Schatten Class and nuclear pseudo-differential operators]
{Schatten Class and nuclear pseudo-differential operators on homogeneous spaces of compact groups}
\author{Vishvesh Kumar} 
\address{Vishvesh Kumar, Ph. D.  \endgraf Department of Mathematics: Analysis, Logic and Discrete Mathematics
	\endgraf Ghent University
	\endgraf Krijgslaan 281, Building S8,	B 9000 Ghent,
	Belgium .} 
\email{vishveshmishra@gmail.com}
\author{Shyam Swarup Mondal} 
\address{Shyam Swarup Mondal  \endgraf Department of Mathematics
	\endgraf IIT Guwahati
	\endgraf Guwahati, Assam, India.} 
\email{mondalshyam055@gmail.com}
\keywords{Pseudo-differential operators, Global quantization, Nuclear operators,  Heat Kernels,  Schatten classes, Traces, Adjoints, Homogeneous spaces of compact groups.} \subjclass[2010]{Primary 35S05, 47G30; Secondary 43A85, }
\date{\today}
\begin{document}
	\allowdisplaybreaks
	
	\begin{abstract} 
		Given a compact (Hausdorff) group $G$ and a closed subgroup $H$ of $G,$ in this paper we present symbolic criteria for  pseudo-differential operators on compact homogeneous space $G/H$ characterizing the Schatten-von Neumann classes $S_r(L^2(G/H))$ for all $0<r \leq \infty.$ We go on to provide a symbolic characterization for  $r$-nuclear, $0< r \leq 1,$\, pseudo-differential operators on $L^{p}(G/H)$-space with applications to adjoint, product and trace formulae.  The criteria here are given in terms of the concept of matrix-valued symbols defined on noncommutative analogue of phase space $G/H \times \widehat{G/H}.$  Finally, we present applications of aforementioned results in the context of heat kernels.
	\end{abstract}

	\maketitle
	\tableofcontents 
	\section{Introduction}
	The theory of pseudo-differential operators is one of the most significant tools in modern mathematics to study the problems of geometry and of partial differential operators \cite{Hor}.
	The study of pseudo-differential operators is first introduced by  Kohn and Nirenberg \cite{Niren}. Ruzhansky and Turunen \cite{RuzT, RT13} studied pseudo-differential operators with matrix-valued symbols on compact (Hausdorff) groups. They introduced symbol classes and symbolic calculus for matrix-valued symbols on compact Lie groups and presented its applications in other directions also. Later, the theory of Pseudo-differential operators with matrix-valued symbols on compact (Hausdoff) groups, compact homogeneous spaces, compact manifolds is broadly studied by several authors \cite{  Cardona, CardonaKumar1,  CardonaKumar,   DK, Aparajita, D2, DR,  DRTk, Ruz, Ghaemi, Vish, KW, Shahla1, Shahla3, RuzT,  Wong2} in many context. 
	
	Let $G$ be a compact (Hausdorff) group and let $H$ be a closed subgroup of $G.$ In this paper we will mainly address the following problems: (1) To find criteria for pseudo-differential operator to be in  Schatten-von Neumann class $S_r$ of operators  on $L^2(G/H)$ for $0<r \leq \infty;$ (2) to find criteria for pseudo-differential operators from $L^{p_1}(G/H)$ into $L^{p_2}(G/H)$   to be $r$-nuclear, $0<r \leq 1,$  for $1 \leq p_1, \,p_2< \infty;$ and (3) applications to find a trace formula and to provide criteria for heat kernel  to be nuclear on $L^{p}(G/H).$ In order to do this, we will use the global quantization developed for compact homogeneous spaces as a non-commutative analogue of the Kohn-Nirenberg quantization of operators on $\mathbb{R}^n.$
	
	Recently, several researchers started a extensive research for finding the criteria for Schatten classes and $r$-nuclear operators in terms of symbols with lower regularity \cite{BT10, DR, DR18, Toft1,  Toft}. Ruzhansky and Delgado \cite{DR, Ruz}  successfully drop the regularity condition at least in their setting using the matrix-valued symbols instead of standard Kohn-Nirenberg quantization. Inspired by the work of Delgado and Ruzhansky, we present symbolic criteria for pseudo-differential operators on $G/H$ to be Schatten class using the matrix-valued symbols defined on $G/H \times \widehat{G/H},$ a noncommutative analogue of phase space.  It is  well known that in the setting of Hilbert spaces the class of $r$-nuclear operators agrees with the Schatten-von Neumann ideal of order $r$ \cite{Oloff}.  In general, for trace class operators on Hilbert spaces, the trace of an operator given by integration of its integral kernel over the diagonal is equal to the sum of its eigenvalues. However, this property fails in Banach spaces. The importance of $r$-nuclear operator lies in the work of Grothendieck, who proved that for  $2/3$-nuclear operators, the trace in Banach spaces agrees with the sum of all the eigenvalues with multiplicities counted. Therefore, the notion of $r$-nuclear operators becomes useful. Now, the question of finding good criteria for ensuring the $r$-nuclearity of operators arises but this has to be formulated in terms different from those on Hilbert spaces and has to take into account the impossibility of certain kernel formulations in view of Carleman's example \cite{Carl} (also see \cite{DR}). In view of this, we will establish conditions imposed on symbols instead of kernels ensuring the $r$-nuclearity of the corresponding operators.  
	
	The initiative of finding the necessary and sufficient conditions for a pseudo-differential operator defined on a group to be $r$-nuclear has been started by Delgado and Wong \cite{Delgado}. The main ingredient of such a characterization is a theorem of Delgado \cite{Delgado}. A multilinear version of Delgado's theorem recently proved by  first author and D. Cardona to study the nuclearity of multilinear pseudo-differential operators on the lattice and the torus \cite{CardonaKumar1, CardonaKumar}. In a seminal paper of Delgado and Ruzhansky \cite{DR}, they studied the $L^p$-nuclearity and traces of pseudo-differential operators on compact Lie groups using the global symbolic calculus developed by Ruzhansky and Turunen \cite{RuzT}. Later, they with their collaborators  extends these results to compact homogeneous spaces and compact manifolds \cite{ D2,DR, DRTk, Ruz}.  On the other hand, Wong and his collaborators extended the version of \cite{Delgado} in the settings of abstract compact groups with differential structure \cite{ Ghasemi, Ghaemi}. Characterizations of nuclear operators in terms of decomposition of symbol through Fourier transform were investigated by Ghaemi, Jamalpour Birgani and Wong for $\mathbb{S}^1$ \cite{Ghasemi}. Later they generalized their results on  nuclearity  to the pseudo-differential operators for any arbitrary compact group \cite{Ghaemi}.  
	
	The homogeneous spaces of abstract compact groups play an important role in mathematical physics, geometric analysis, constructive approximation and coherent state transform, see \cite{ {Kir1}, {Kis4},  {Kis3}, {Kis2},  {Kis1}, {Niren}} and the references therein. The study of pseudo-differential operators on homogeneous spaces of compact groups was started by the first author \cite{Vish}. We use the operator-valued Fourier transform on homogeneous spaces of compact groups developed by Ghani Farashahi \cite{Fara2}. By using this Fourier transform,  we introduce a global pseudo-differential calculus for homogeneous spaces of compact groups and study the Schatten class  of operator on $L^2(G/H)$ and $r$-nuclear  operators on $L^p$-spaces on compact homogeneous spaces. Our results can be seen as a compliment and  generalization  of the work of Delgado and Ruzhansky, pseudo-differential operators for compact Lie groups \cite{DR} as well as generalization of work of Ghaemi and Wong \cite{Ghasemi} on pseudo-differential operators on compact groups.        
	
	%The main aim  of this paper is to present a symbolic characterizations of Schatten class and nuclear operators on homogeneous space of compact groups using global quantization.  We  give necessary and  sufficient conditions on the  symbols for the corresponding pseudo-differential operator to be a Schatten class operator or nuclear operator. We also give  necessary and  sufficient conditions on the  symbols to guarantee that the adjoints and  products of pseudo-differential operators are nuclear. 

	We begin this paper by recapitulating some basic  Fourier analysis on homogeneous spaces on compact groups from \cite{Fara2} in Section 2 although a parallel theory of homogeneous space of compact Lie groups can be found in classical book of Vilenkin \cite{Vilenkin} and recent papers and books \cite{Applebaum, AR, DR18, NRT16}. Later in this section we present a global quantization (Ruzhansky and Turunen \cite{RuzT}) on homogeneous spaces of compact groups related to a matrix-valued symbols.  In Section 3, we give  symbolic criteria of $r$-Schatten class operators defined on $L^2(G/H)$. In Section 4, we start our investigation on $r$-nuclear operator. We begin this section by providing sufficient conditions for an operator be $r$-nuclear in term of conditions on symbol of the operator. We also present a  characterization of $r$-nuclear pseudo-differential operator on  $L^p$-space for homogeneous spaces of compact groups. We calculate the nuclear trace of related pseudo-differential operators. In Section 5, we find a symbol for the adjoint of $r$-nuclear pseudo-differential operators on homogeneous space of compact groups and give a characterization for self-adjointness. We also compute the symbol of the product of a nuclear operator and a bounded linear operator. We end this paper by presenting applications of our results in the context of  heat kernels. 
	
	\section{Fourier analysis and the global quantization on homogeneous spaces of compact groups}
	
	We begin this section by recalling some basic and important facts of harmonic analysis on homogeneous spaces of compact (Hausdorff) groups from \cite{Fara2} which is almost similar to the theory given in \cite{Applebaum} and   \cite{Vilenkin} (see also \cite{ AR, DR18, NRT16}) for homogeneous spaces of compact Lie groups. 
	
	Let $G$ be a compact (Hausdorff) group with normalized Haar measure $dx$ and let $H$ be a closed subgroup of $G$ with probability Haar measure $dh.$ The left coset space $G/H$ can be seen as a homogeneous space with respect to the action of $G$ on $G/H$ given by left multiplication. Let $\mathcal{C}(\Omega)$ denote the space of continuous functions on a compact Hausdorff space $\Omega.$ Define $T_H: \mathcal{C}(G) \rightarrow \mathcal{C}(G/H)$ by $$T_H(f)(xH)= \int_H f(xh)\,dh, \,\,\,\, xH \in G/H.$$ Then $T_H$ is an onto map. 
	The homogeneous space $G/H$ has a unique normalized $G$-invariant positive Radon measure $\mu$ such that Weil formula $$ \int_{G/H}T_H(f)(xH)\, d\mu(xH)= \int_G f(x)\,dx$$ holds. The map $T_H$ can be extended to $L^2(G/H, \mu)$ and is a partial isometry on $L^2(G/H)$ with  $\left\langle T_{H}(f), T_{H}(g)\right\rangle_{L^{2}(G/H, \mu)}=\langle f, g\rangle_{L^{2}(G)}$ for all $f, g \in L^{2}(G) $.
	
	%	Let \(\mathcal{J}^{p}(G, H) :=\left\{f \in L^{p}(G) : T_{H}(f)=0\right\} .\) Then, %\(\mathcal{J}^{2}(G, H)^{\perp}\) is the orthogonal completion of the
	%closed subspace \(\mathcal{J}^{2}(G, H)\) in \(L^{2}(G) .\)
	
	%	\begin{cor}\label{2000}
	%	Let \(H\) be a closed subroup of a compact group \(G .\) Let \(P_{\mathcal{J}^{2}(G, H)}\) and \(P_{{\mathcal{J}^{2}(G, H)}^{\perp}},\) be
	%		the orthgonal priogetions onto the closed subspaces \(\mathcal{J}^{2}(G, H)\) and \(\mathcal{J}^{2}(G, H)^{{\perp}}\) respectively. Then, for
	%		each \(f \in L^{2}(G)\) and \(a . e . x \in G\) we have
	%		$$
	%		P_{\mathcal{J}^{2}(G, H)^{\perp}}(f)(x)=T_{H}(f)(x H), \quad \quad P_{\mathcal{J}^{2}(G, H)}(f)=f(x)-T_{H}(f)(x H).
	%		$$
	%	\end{cor}\label{3000}
	%	\begin{cor}
	%		Let \(H\) be a compact subroup of a locally compact group \(G\) and \(\mu\) be a G-invariant
	%		measure on G/H. Then
	%		\begin{enumerate}
	%			\item  \(\mathcal{J}^{2}(G, H)^{\perp}=\left\{\psi_{q}=\psi \circ q : \psi \in L^{2}(G / H, \mu)\right\}\)
	%%%%			%\item  For all \(f \in \mathcal{J}^{2}(G, H, \mu)\) and each \(E H\left\|_{[2}(G)=\right\| \psi\left\|_{ \|^{2}(G)}=\right\| \psi\left\|_{ \|^{2}(G)}\right\|_{L^{2}(G) /(G) / 2(G)}\)
	%%			\item  For all \(f, g \in \mathcal{J}^{2}(G, H)^{\perp}\) we have \(\left\langle T_{H}(f), T_{H}(g)\right\rangle_{L^{2}(G/H, \mu)}=\langle f, g\rangle_{L^{2}(G)}\).
	%		\end{enumerate}
	%	\end{cor}  

	Let $(\pi, \mathcal{H}_\pi)$ be continuous unitary representation of compact group $G$ on a Hilbert space $\mathcal{H}_\pi.$ It is well-known that any irreducible representation $(\pi, \mathcal{H}_\pi)$ is finite dimensional with the dimension $d_\pi$ (say).  Consider the operator valued integral
	$$T_H^\pi := \int_H \pi(h)\, dh$$
	{defined in the weak sense, i.e., $\langle T_H^\pi u, v \rangle = \int_H \langle \pi(h)u, v\rangle\, dh,$ for all $u,v \in \mathcal{H}_\pi.$ Note that  $T_H^\pi$ is a bounded linear operator on $\mathcal{H}_\pi$ with norm bounded by one. Further, $T_H^\pi$ is a partial isometric orthogonal projection and $T_H^\pi$ is an identity operator  if and only if $\pi(h)=I$ for all $h \in H.$  
		\begin{defn} Let $H$ be a closed subgroup of a compact group $G.$ Then the dual object $\widehat{G/H}$ of $G/H$ is a subset of $\widehat{G}$ and  given by $$ \widehat{G/H}:= \left\lbrace \pi \in \widehat{G}: T_H^\pi \neq 0  \right\rbrace= \left\lbrace \pi \in \widehat{G}: \int_H \pi(h)\,dh \neq 0 \right\rbrace.$$ \end{defn}
		We would like to note here that the set $\widehat{G/H}$ is the set of all type $1$ representations of $G$ with respect to $H$ which was denoted by $\widehat{G_0}$ in \cite{ NRT16, Vilenkin}.
		
		Let $\pi \in \widehat{G/H}.$ Then the functions \(\pi_{\zeta, \xi}^{H} : G / H \rightarrow \mathbb{C}\) defined by
		$$\pi_{\zeta, \xi}^{H}(x H) :=\left\langle\pi(x) T_{H}^{\pi} \zeta, \xi\right\rangle \quad \text{for \(x H \in G / H, \)}$$
		for \(\xi, \zeta \in \mathcal{H}_{\pi},\) are called \(H\) -matrix elements of \(\left(\pi, \mathcal{H}_{\pi}\right)\). If $\{e_1, e_2, \dots, e_{d_\pi}\}$ is an orthonormal basis for $\mathcal{H}_{\pi}$ then we denote $\langle\pi(x) T_{H}^{\pi}e_i, e_j \rangle$ by $\pi_{ij}^H(xH).$ Now, using the orthogonality relation of matrix coefficients of $G$ and the fact   \(T_{H}\left(\pi_{\zeta, \xi}\right)=\pi_{\zeta, \xi}^{H}\)	 we have 
		$$  \left\langle \pi_{i,  j}^{H}, \xi_{k, l}^{H} \right\rangle_{L^{2}(G/H, \mu)}=\frac{1}{d_{\pi}}\delta_{\pi \xi} \delta_{ik}\delta_{jl}.$$

		Let \(\varphi \in L^{1}(G / H, \mu)\) and \(\pi \in \widehat{G / H} .\) Then the  group Fourier transform of \(\varphi\) at \(\pi\) is a bounded linear operator  defined by  
		\begin{align}\label{ft}
		\mathcal{F}_{G / H}(\varphi)(\pi)=\hat{\varphi}(\pi) :=\int_{G / H} \varphi(x H) \Gamma_{\pi}(x H)^{*} \,d \mu(x H)
		\end{align}
		on the Hilbert space \(\mathcal{H}_{\pi},\) where for \(x H \in G / H\) the notation \(\Gamma_{\pi}(x H)\) stands for the bounded linear operator on \(\mathcal{H}_{\pi}\) satisfying
		$$
		\left\langle\zeta, \Gamma_{\pi}(x H) \xi\right\rangle=\left\langle\zeta, \pi(x) T_{H}^{\pi} \xi\right\rangle
		$$
		for all \(\zeta, \xi \in \mathcal{H}_{\pi}\). 
		%Moreover \(\hat{\varphi}(\pi)\) is a bounded linear operator on \(\mathcal{H}_{\pi}\) satisfying
		%$$
		%\|\hat{\varphi}(\pi)\| \leq\|\varphi\|_{L^{1}(G / H, \mu)}.
		%$$
		Note that from the notation of  $\Gamma_{\pi}(xH)$, the  $H$-matrix coefficients $ \pi_{i,  j}^{H}(xH)$ are same as $\Gamma_{\pi}(xH)_{ij}$.		Moreover if \(f \in L^{2}(G / H)\) then $\hat{\varphi}(\pi)$ is a Hilbert-Schmidt operator on $ \mathcal{H}_{\pi}$ and satisfies the following  Plancherel formula as stated in next theorem. 
		\begin{thm}
			For  \(\varphi \in L^{2}(G / H, \mu)\) we have
			$$\sum_{[\pi] \in \widehat{G / H}} d_{\pi}\|\hat{\varphi}(\pi)\|_{S_2}^{2}=\|\varphi\|_{L^{2}(G / H, \mu)}^{2},$$
			where $\|.\|_{S_2}$ stands for the Hilbert-Schmidt norm on the space of all Hilbert-Schmidt operators on $ \mathcal{H}_{\pi}.$
		\end{thm}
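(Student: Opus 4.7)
The plan is to reduce the identity to a Parseval expansion with respect to the orthonormal system of (scaled) $H$-matrix coefficients. The essential inputs are already supplied in the excerpt: the definition of $\Gamma_\pi$, the identity $T_H(\pi_{\zeta,\xi}) = \pi_{\zeta,\xi}^H$, the partial isometry property of $T_H$, and the orthogonality relation
\[
\langle \pi_{ij}^H, \xi_{kl}^H\rangle_{L^2(G/H,\mu)} = \tfrac{1}{d_\pi}\delta_{\pi\xi}\delta_{ik}\delta_{jl}.
\]

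First, I would fix an orthonormal basis $\{e_1,\dots,e_{d_\pi}\}$ of $\mathcal{H}_\pi$ and compute the matrix entries of $\widehat\varphi(\pi)$. Unwinding the defining identity for $\Gamma_\pi$ gives $\Gamma_\pi(xH) = \pi(x)T_H^\pi$, so
\[
\widehat\varphi(\pi)_{ij} = \langle \widehat\varphi(\pi)e_j, e_i\rangle = \int_{G/H} \varphi(xH)\,\overline{\langle \pi(x)T_H^\pi e_j, e_i\rangle}\,d\mu(xH) = \langle \varphi, \pi_{ji}^H\rangle_{L^2(G/H,\mu)}.
\]
Expanding the Hilbert-Schmidt norm as $\|\widehat\varphi(\pi)\|_{S_2}^2 = \sum_{i,j}|\widehat\varphi(\pi)_{ij}|^2$ therefore yields
\[
d_\pi \|\widehat\varphi(\pi)\|_{S_2}^2 = \sum_{i,j=1}^{d_\pi} \bigl|\langle \varphi, \sqrt{d_\pi}\,\pi_{ji}^H\rangle_{L^2(G/H,\mu)}\bigr|^2.
\]

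Next I would verify that the family $\mathcal{B} = \{\sqrt{d_\pi}\,\pi_{ij}^H : [\pi]\in\widehat{G/H},\ 1\le i,j\le d_\pi\}$ is a complete orthonormal system in $L^2(G/H,\mu)$. Orthonormality is immediate from the stated orthogonality relation. For completeness, I would invoke the classical Peter--Weyl theorem on $G$, which asserts that $\{\sqrt{d_\pi}\,\pi_{ij} : [\pi]\in\widehat G\}$ is an orthonormal basis of $L^2(G)$, and then push this down through the partial isometry $T_H\colon L^2(G)\to L^2(G/H,\mu)$. Using $T_H(\pi_{ij}) = \pi_{ij}^H$ and $T_H(\pi_{ij}) = 0$ exactly when $T_H^\pi$ annihilates $e_i$ (in particular whenever $[\pi]\notin\widehat{G/H}$), the image of the Peter--Weyl basis under $T_H$ spans $L^2(G/H,\mu)$; combining with orthogonality identifies the non-vanishing images, after rescaling, with $\mathcal{B}$. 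Surjectivity of $T_H$ onto $L^2(G/H,\mu)$ is what guarantees that no functions are missed.

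With $\mathcal{B}$ established as an orthonormal basis, Parseval's identity gives
\[
\|\varphi\|_{L^2(G/H,\mu)}^2 = \sum_{[\pi]\in\widehat{G/H}}\sum_{i,j=1}^{d_\pi} \bigl|\langle\varphi,\sqrt{d_\pi}\,\pi_{ji}^H\rangle\bigr|^2 = \sum_{[\pi]\in\widehat{G/H}} d_\pi\,\|\widehat\varphi(\pi)\|_{S_2}^2,
\]
which is the claimed formula. The only genuinely non-routine step is the completeness of $\mathcal{B}$; once one commits to deducing it from Peter--Weyl on $G$ via $T_H$, the subtlety to keep track of is that zero vectors in $T_H(\pi_{ij})$ correspond precisely to representations outside $\widehat{G/H}$ (or to basis directions in the kernel of $T_H^\pi$), so the summation index collapses correctly to $[\pi]\in\widehat{G/H}$ without double counting.
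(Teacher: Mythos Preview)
The paper does not prove this theorem; it is stated without proof as a background Plancherel formula recalled from Ghani Farashahi's work (reference \cite{Fara2} in the paper). Your approach via Peter--Weyl and Parseval is the standard one and is correct in outline.

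Two points deserve tightening. First, the family $\mathcal{B}$ as you define it is not literally orthonormal: $\pi_{ij}^H$ vanishes identically whenever $e_i\in\ker T_H^\pi$, so the displayed orthogonality relation from the excerpt only holds for the nonzero members. You acknowledge this at the end, but the argument is cleaner if from the outset you choose the basis of $\mathcal{H}_\pi$ adapted to the splitting $\mathcal{H}_\pi=\ker T_H^\pi\oplus\operatorname{ran}T_H^\pi$ and index only over those $i$ with $T_H^\pi e_i=e_i$. The identity is unaffected because for $e_i\in\ker T_H^\pi$ one also has $\widehat\varphi(\pi)_{ij}=0$, so those terms contribute nothing to $\|\widehat\varphi(\pi)\|_{S_2}^2$. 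Second, a minor index slip: with the paper's convention $\pi_{ij}^H(xH)=\langle\pi(x)T_H^\pi e_i,e_j\rangle$, one gets $\widehat\varphi(\pi)_{ij}=\langle\varphi,\pi_{ij}^H\rangle$ rather than $\langle\varphi,\pi_{ji}^H\rangle$; this is harmless after summing over $i,j$ but worth correcting.
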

		\begin{thm}
			For  $\varphi \in L^{2}(G / H, \mu)$ the following Fourier inversion formula holds
			\begin{align} \label{inverse}
			\varphi(x H)=\sum_{[\pi] \in \widehat{G / H}} d_{\pi} \operatorname{Tr}[\hat{\varphi}(\pi) \pi(x) T_{H}^{\pi}] \quad \text { for } \mu-\text {a.e. } x H \in G / H.
			\end{align}
		\end{thm}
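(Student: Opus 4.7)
My plan is to derive the inversion formula by expanding $\varphi$ in the orthonormal system of $H$-matrix coefficients, and then repackaging the coefficients into the trace expression using the definition of $\hat\varphi(\pi)$ and the operator $\Gamma_\pi$.

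Step one is to observe that the orthogonality relation displayed just before the group Fourier transform,
\[
\left\langle \pi_{ij}^H,\xi_{kl}^H\right\rangle_{L^2(G/H,\mu)}=\frac{1}{d_\pi}\delta_{\pi\xi}\delta_{ik}\delta_{jl},
\]
says that the family $\{\sqrt{d_\pi}\,\pi_{ij}^H : [\pi]\in\widehat{G/H},\;1\le i,j\le d_\pi\}$ is orthonormal in $L^2(G/H,\mu)$. Completeness is the one point that needs justification: I would deduce it either from the Plancherel theorem already recorded in the previous theorem (any $\varphi$ orthogonal to all $\pi_{ij}^H$ has $\hat\varphi(\pi)=0$ for every $\pi\in\widehat{G/H}$, hence $\|\varphi\|_{L^2}=0$), or, alternatively, by lifting $\varphi$ to the right-$H$-invariant function $\widetilde\varphi=\varphi\circ q\in L^2(G)$, applying the classical Peter--Weyl expansion on $G$, and noting that for $\pi\notin\widehat{G/H}$ one has $T_H^\pi=0$, so only coefficients associated with $\widehat{G/H}$ survive after averaging by $T_H$.

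Granted completeness, I would write
\[
\varphi(xH)=\sum_{[\pi]\in\widehat{G/H}}d_\pi\sum_{i,j=1}^{d_\pi}\bigl\langle\varphi,\pi_{ij}^H\bigr\rangle\,\pi_{ij}^H(xH),
\]
with convergence in $L^2$ and hence almost everywhere on a suitable subsequence. The next step is to identify the Fourier coefficient $\langle\varphi,\pi_{ij}^H\rangle$ with a matrix entry of $\hat\varphi(\pi)$. Using $(\Gamma_\pi(xH))_{ij}=\pi_{ij}^H(xH)$ and the definition \eqref{ft}, a direct computation gives $(\hat\varphi(\pi))_{ji}=\int_{G/H}\varphi(xH)\overline{\pi_{ij}^H(xH)}\,d\mu(xH)=\langle\varphi,\pi_{ij}^H\rangle$. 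Substituting, the double sum becomes
\[
\sum_{i,j=1}^{d_\pi}(\hat\varphi(\pi))_{ji}(\Gamma_\pi(xH))_{ij}=\operatorname{Tr}\bigl[\hat\varphi(\pi)\,\Gamma_\pi(xH)\bigr].
\]
Finally, the defining relation $\langle\zeta,\Gamma_\pi(xH)\xi\rangle=\langle\zeta,\pi(x)T_H^\pi\xi\rangle$ for all $\zeta,\xi\in\mathcal H_\pi$ gives the operator identity $\Gamma_\pi(xH)=\pi(x)T_H^\pi$, which yields exactly \eqref{inverse}.

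The main obstacle, as already indicated, is the completeness of the $H$-matrix coefficient system in $L^2(G/H,\mu)$ and the mode of convergence of the series. Completeness is painlessly settled via Plancherel, but writing the inversion as a pointwise almost-everywhere identity requires the standard care: $L^2$-convergence gives the formula for almost every $xH$ after passing to a subsequence of partial sums, and for $\varphi$ smooth enough (e.g., in the image of $T_H$ applied to a trigonometric polynomial on $G$), the sum is finite and the identity is literal. Everything else is essentially bookkeeping with the isometry $T_H$ and the orthogonality relations transported from $G$ to $G/H$.
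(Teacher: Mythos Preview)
The paper does not supply a proof of this theorem: it is stated in Section~2 as part of the background material recalled from \cite{Fara2}, so there is no ``paper's own proof'' to compare against. Your argument is a correct and natural derivation of the inversion formula from the orthogonality relations and the Plancherel theorem stated just before it. The identification $(\hat\varphi(\pi))_{ji}=\langle\varphi,\pi_{ij}^H\rangle$ and the repackaging into a trace are routine, and your use of Plancherel to deduce completeness (if $\varphi\perp\pi_{ij}^H$ for all $\pi,i,j$ then $\hat\varphi(\pi)=0$ for all $\pi$, hence $\|\varphi\|_{L^2}=0$) is exactly the right move.

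One caveat worth flagging, though it is an issue with the paper's presentation rather than your argument: the orthogonality relation $\langle\pi_{ij}^H,\xi_{kl}^H\rangle=d_\pi^{-1}\delta_{\pi\xi}\delta_{ik}\delta_{jl}$ as written would force every $\pi_{ij}^H$ to have norm $d_\pi^{-1/2}>0$, but since $T_H^\pi$ is only a projection, some of the $\pi_{ij}^H$ may vanish identically (namely when $e_i\in\ker T_H^\pi$). In the source \cite{Fara2} one works with a basis of $\mathcal H_\pi$ adapted to the range of $T_H^\pi$, and only the nonzero $\pi_{ij}^H$ form the orthonormal system. This does not affect your proof---the zero coefficients simply drop out of both the expansion and the trace---but it is worth being aware of when reading the statement literally.
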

		
		We would like to record the following lemma whose  proof is similar to \cite[Lemma 2.5]{DR} by using the fact that the operator $T_H^\pi$ is norm bounded by one. 
		\begin{lem} \label{vis2.4}
			Let $G/H$ be a compact homogeneous space with normalized measure $\mu$ and let $\pi \in \widehat{G/H}.$ Then for all $1 \leq i, \, j \leq d_\pi$ we have 
			$$\|\Gamma_\pi(\cdot)_{ij}\|_{L^q(G/H)} \leq \begin{cases} d_{\pi}^{-\frac{1}{q}} & 2 \leq q \leq \infty, \\ d_\pi^{-\frac{1}{2}} & 1 \leq q \leq 2, \end{cases}$$ with the convention that for $q=\infty$ we have $d_\pi^{-\frac{1}{q}}=1.$
		\end{lem}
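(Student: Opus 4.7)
The plan is to establish the two endpoint estimates $q=2$ and $q=\infty$, then obtain the full range by interpolation and the standard nesting of $L^q$ spaces on a probability space.

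First I would dispatch the $L^2$ endpoint. The $H$-matrix coefficient identity $\Gamma_\pi(xH)_{ij}=\pi_{ij}^H(xH)$ was recorded just before the lemma, and the orthogonality relation
\[
\langle \pi_{ij}^{H},\pi_{kl}^{H}\rangle_{L^{2}(G/H,\mu)}=\frac{1}{d_{\pi}}\delta_{\pi\xi}\delta_{ik}\delta_{jl}
\]
gives immediately $\|\Gamma_\pi(\cdot)_{ij}\|_{L^2(G/H)}=d_\pi^{-1/2}$. Next I would handle the $L^\infty$ endpoint. By definition
\[
|\Gamma_\pi(xH)_{ij}|=|\langle \pi(x)T_H^\pi e_i,e_j\rangle|\leq \|\pi(x)T_H^\pi e_i\|\,\|e_j\|\leq \|T_H^\pi\|_{\mathrm{op}}\leq 1,
\]
using the unitarity of $\pi(x)$ and the fact (emphasized in the preceding paragraph) that $T_H^\pi$ is a bounded operator of norm at most one. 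Hence $\|\Gamma_\pi(\cdot)_{ij}\|_{L^\infty(G/H)}\leq 1$.

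For the range $2\leq q\leq \infty$ I would interpolate between these two endpoints via the log-convexity of $L^p$ norms (equivalently, Hölder with exponents $q/2$ and $q/(q-2)$): writing $1/q=(1-\theta)/\infty+\theta/2$ with $\theta=2/q$, one gets
\[
\|\Gamma_\pi(\cdot)_{ij}\|_{L^q}\leq \|\Gamma_\pi(\cdot)_{ij}\|_{L^\infty}^{1-2/q}\|\Gamma_\pi(\cdot)_{ij}\|_{L^2}^{2/q}\leq 1\cdot d_\pi^{-1/q},
\]
which is the first case (and matches the convention $d_\pi^{-1/\infty}=1$ at $q=\infty$).

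For the range $1\leq q\leq 2$ I would simply invoke that $\mu$ is a probability measure on $G/H$, so $L^{2}(G/H,\mu)\hookrightarrow L^{q}(G/H,\mu)$ contractively for $q\leq 2$ (Jensen's inequality, or Hölder with exponent $2/q$); therefore
\[
\|\Gamma_\pi(\cdot)_{ij}\|_{L^q(G/H)}\leq \|\Gamma_\pi(\cdot)_{ij}\|_{L^2(G/H)}=d_\pi^{-1/2}.
\]
No step here is really an obstacle — the main point (exactly as noted in the remark preceding the lemma) is that one must use $\|T_H^\pi\|_{\mathrm{op}}\leq 1$ to push the Lie-group argument of Delgado–Ruzhansky through in the homogeneous-space setting; once that supplies the $L^\infty$ bound, the rest is routine interpolation.
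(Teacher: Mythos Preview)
Your proposal is correct and is exactly the argument the paper intends: the paper does not write out a proof but simply refers to \cite[Lemma~2.5]{DR} together with the bound $\|T_H^\pi\|_{\mathrm{op}}\le 1$, and the Delgado--Ruzhansky argument is precisely the $L^2$/$L^\infty$ endpoint estimate followed by log-convex interpolation for $2\le q\le\infty$ and the probability-space nesting $L^2\hookrightarrow L^q$ for $1\le q\le 2$ that you have spelled out. One cosmetic point: the orthogonality relation only guarantees $\|\Gamma_\pi(\cdot)_{ij}\|_{L^2}\le d_\pi^{-1/2}$ in general (equality requires $e_i$ to lie in the range of the projection $T_H^\pi$), but of course only the inequality is needed here.
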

		%Since for \(\varphi \in L^{1}(G / H, \mu)\) and \(\pi \in \widehat{G / H}\),  $ T_{H}^{\pi} \hat{\varphi}(\pi)  =\hat{\varphi}(\pi)$, the above inversion formula also can be written as $$	\varphi(x H)=\sum_{[\pi] \in \widehat{G / H}} d_{\pi} \operatorname{tr}[\hat{\varphi}(\pi) \pi(x) T_{H}^{\pi}] \quad \text { for } \mu-\text {a.e. } x H \in G / H.$$
		
		Given a continuous  linear  operator  $T: C(G/H) \rightarrow C(G/H)$, its {\it  matrix-valued  global symbol} $ \sigma_T(xH, \pi)\in \mathbb{C}^{d_{\pi}\times  {d_{\pi}}}$ is defined by
		\begin{align}\label{600}
		T_{H}^{\pi}  \sigma_T(xH, \pi) =\pi(x)^* (T\Gamma_{\pi})(xH),
		\end{align}
		where $T\Gamma_{\pi}$ stands for the action of  $T$ on the  matrix components of $\Gamma_{\pi}(xH)$. 
		
		Thus setting
		$(T\Gamma_{\pi}(xH))_{mn}=(T (\Gamma_{{\pi}_{mn}} ) )(xH)$, we have
		\begin{align*}
		(T_{H}^{\pi}  \sigma_T(xH, \pi))_{mn} :=\sum_{k=1}^{d_\pi} \overline{\pi_{km}(x)} (T\Gamma_{\pi}(xH))_{kn},
		\end{align*}
		where $1\leq m, n\leq d_{\pi}$.

		Now, let the symbol $\sigma_T$ is a matrix-valued global symbol for continuous linear operator $T: C(G/H) \rightarrow C(G/H)$ as above. Then we can recover the operator $T$  by using the Fourier   inversion formula as follows:
		\begin{align*}
		Tf(x H) &=T \left( \sum_{[\pi] \in \widehat{G / H}} d_{\pi} \operatorname{Tr}( \pi(x) T_{H}^{\pi}\hat{f}(\pi)) \right)\\
		%&=T\left( \sum_{[\pi] \in \widehat{G / H}} d_{\pi} \operatorname{tr}( \pi(x) T_{H}^{\pi}\hat{f}(\pi)) \right)= T \left( \sum_{[\pi] \in \widehat{G / H}} d_{\pi} \operatorname{tr}( \Gamma_\pi(xH)\, \hat{f}(\pi)) \right) \\
		&= \sum_{[\pi] \in \widehat{G / H}} d_{\pi} \operatorname{Tr}( T \Gamma_\pi(xH)\,\hat{f}(\pi)).
		\end{align*}
		By using \eqref{600} and the relation $\pi(x)T_H^\pi =\Gamma_\pi(xH),$ we get 
		\begin{align} \label{vishquant}
		Tf(xH)=   \sum_{[\pi] \in \widehat{G / H}} d_{\pi} \operatorname{Tr}( \Gamma_\pi(xH) \sigma_T(xH, \pi) \hat{f}(\pi))
		\end{align}
		for all $f \in C(G/H),$  $\mu$-a.e. $xH \in G/H$ and the sum is independent of the representation from each equivalence class $[\pi] \in \widehat{G/H}.$ We will also write $T= \text{Op}(\sigma_T)$ for operator $T$ given by the formula \eqref{vishquant} and will be called a {\it pseudo-differential operator} corresponding to matrix-valued symbol $\sigma_T.$  For more details and consistent development of this quantization on compact Lie group and the corresponding  symbolic calculus we refer \cite{RuzT} and \cite{RT13}. 
		
		\begin{rmk}
			Let $H$ be a closed normal subgroup of the  compact group $G$ and let $\mu$ be the normalized $G$-invariant measure over the left quotient space $G/H$ associated to the weil's formula. Then $\mu$ is a Haar measure over the compact quotient group $G/H$ and  $\widehat{G/H}=H^{\perp}:=\{ \pi \in \widehat{G}: \pi(h)=I ~\text{for all } ~h\in H\}$.
			Moreover the Fourier transform (\ref{ft}), inverse Fourier transform (\ref{inverse}) and the pseudo-differential operator given in (\ref{vishquant}) coincide with the classical Fourier transform, Inverse Fourier transform and pseudo-differential operator over the compact quotient group $G/H$ respectively. 
		\end{rmk}
		
		\section{$r$-Schatten-von Neumann class of  pseudo-differential operators on $L^2(G/H)$} 
		
		This section is devoted to the study of $r$-Schatten-von Neumann  class of pseudo-differential operators on the Hilbert space $L^2(G/H).$  We begin this section with the definition of $r$-Schatten-von Neumann class of operators.
		
		If \(\mathcal{H}\) is a complex Hilbert space, a linear compact operator \(A : \mathcal{H} \rightarrow \mathcal{H}\) belongs to the $r$-Schatten-von Neumann class \(S_{r}(\mathcal{H})\) if
		$$
		\sum_{n=1}^{\infty}\left(s_{n}(A)\right)^{r}<\infty,
		$$where \(s_{n}(A)\) denote the singular values of \(A,\) i.e. the eigenvalues of \(|A|=\sqrt{A^{*} A}\)
		with multiplicities counted.
		
		For \(1 \leq r<\infty\),  the class \(S_{r}(\mathcal{H})\) is  a Banach space
		endowed with the norm
		$$
		\|A\|_{S_{r}}=\left(\sum_{n=1}^{\infty}\left(s_{n}(A)\right)^{r}\right)^{\frac{1}{r}}.
		$$
		
		For  \(0<r<1\), the $\|\cdot\|_{S_r}$ as above only defines a quasi-norm with respect to which
		\(S_{r}(\mathcal{H})\) is complete. An operator belongs to the class \(S_{1}(\mathcal{H})\) is known as {\it Trace class} operator. Also, an operator belongs to   \(S_{2}(\mathcal{H})\) is known as  {\it Hilbert-Schmidt} operator.
		
		%In the case of \(r=\infty\) we can put \(\|A\|_{S_{\infty}}\) to be the operator norm of the bounded
		%	operator \(A : L^{2}(G/H) \rightarrow L^{2}(G/H) .\) 
		%In this case it can be easily seen by the Plancherel	formula that for an invariant operator \(A,\) we have $$A \in S_{\infty}\left(L^{2}(G/H)\right) \quad \text { if and only if } \quad \sup _{[\xi] \in \widehat{G/H}}\left\|\sigma_{A}(\xi)\right\|_{o p}<\infty.$$
		%So, in the sequel we can assume that $0 < r < \infty$. 
		
		Let $L^2(G/H \times  \widehat{G / H})$ denotes the space of all matrix-valued function $\sigma_A$ on $G/H \times  \widehat{G / H}$ such that  
		$$\|\sigma_A\|_{L^2(G/H \times  \widehat{G / H})}=\left(  \int_{G/H}  \sum_{[\xi] \in \widehat{G/H}}d_{\xi}  \| \sigma_A(xH, \xi) T_{H}^{\xi} \|_{S_{2}}^2  \,d\mu{(xH)}\right)^{\frac{1}{2}}<\infty.$$
		The following theorem gives a characterization of Hilbert-Schmidt pseudo-differential operators on $G/H$. We would like to remark here that the following theorem is already proved by the first author in \cite{Vish} using a different method.  
		\begin{thm}\label{110}
			Let $T : L^2 (G/H ) \rightarrow L^2 (G/H )$ be a continuous linear operator with the matrix-valued symbol \(\sigma_T\) on \(G/H \times \widehat{G/H}\). Then the operator $T$ is a Hilbert-Schmidt  operator if and only if 
			$	\sigma_T\in L^2(G/H \times  \widehat{G / H})$.  Moreover, we have  $$ \| T \|_{S_2} =   \|\sigma_T\|_{L^2(G/H \times  \widehat{G / H})}. $$
		\end{thm}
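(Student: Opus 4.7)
The idea is to (i) extract the integral kernel of $T$ from the quantization formula, (ii) invoke the classical identification of $S_{2}(L^{2}(G/H))$ with integral operators whose kernel lies in $L^{2}(G/H \times G/H)$ (with equal norms), and (iii) compute the $L^{2}$ norm of the kernel in terms of $\sigma_{T}$ using the orthogonality of the $H$-matrix coefficients.

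For step~(i), substituting the Fourier transform \eqref{ft} into the quantization formula \eqref{vishquant} and interchanging sum and integral yields
\[
Tf(xH) \;=\; \int_{G/H} K_{T}(xH,yH)\,f(yH)\,d\mu(yH), \qquad K_{T}(xH,yH)\;=\;\sum_{[\pi]\in\widehat{G/H}} d_{\pi}\,\operatorname{Tr}\!\bigl(\Gamma_{\pi}(xH)\,\sigma_{T}(xH,\pi)\,\Gamma_{\pi}(yH)^{*}\bigr).
\]
For step~(iii), for each fixed $xH$ I would expand $K_{T}(xH,yH)\overline{K_{T}(xH,yH)}$ as a double sum of products of matrix coefficients and integrate termwise in $yH$ using the orthogonality relation $\langle \pi_{i,j}^{H},\xi_{k,l}^{H}\rangle_{L^{2}(G/H)}=d_{\pi}^{-1}\delta_{\pi\xi}\delta_{ik}\delta_{jl}$ recorded in Section~2. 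The Kronecker deltas collapse the double sum over $[\pi],[\xi]$ to a single sum over $[\pi]$, and then the identity
\[
\Gamma_{\pi}(xH)^{*}\Gamma_{\pi}(xH)\;=\;T_{H}^{\pi}\,\pi(x)^{*}\pi(x)\,T_{H}^{\pi}\;=\;T_{H}^{\pi},
\]
arising from the unitarity of $\pi(x)$ and the fact that $T_{H}^{\pi}$ is a self-adjoint projection, collapses the remaining sum over matrix indices. After applying the cyclic property of the trace, one arrives at
\[
\int_{G/H}|K_{T}(xH,yH)|^{2}\,d\mu(yH)\;=\;\sum_{[\pi]\in\widehat{G/H}} d_{\pi}\,\|\sigma_{T}(xH,\pi)\,T_{H}^{\pi}\|_{S_{2}}^{2},
\]
and integrating in $xH$ via Fubini--Tonelli gives the required norm identity $\|K_{T}\|_{L^{2}(G/H\times G/H)}=\|\sigma_{T}\|_{L^{2}(G/H\times\widehat{G/H})}$. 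The converse (sufficiency) then follows by running the same computation with $\sigma_{T}$ first truncated to a finite subset of $\widehat{G/H}$ and extending by density.

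I expect the most delicate point to be the final simplification: since $T_{H}^{\pi}$ is a nontrivial projection on $\mathcal{H}_{\pi}$ when $H\neq\{e\}$, the defining relation \eqref{600} pins down $\sigma_{T}(xH,\pi)$ only up to operators annihilated by $T_{H}^{\pi}$ on the left. The natural remedy is to adopt the convention $\sigma_{T}(xH,\pi)=T_{H}^{\pi}\,\sigma_{T}(xH,\pi)\,T_{H}^{\pi}$, which is consistent with \eqref{600} because the columns of $(T\Gamma_{\pi})(xH)$ indexed by $\operatorname{Ker} T_{H}^{\pi}$ vanish identically. Under this convention, both $T_{H}^{\pi}\sigma_{T}(xH,\pi)$ and $\sigma_{T}(xH,\pi)\,T_{H}^{\pi}$ reduce to $\sigma_{T}(xH,\pi)$, reconciling the Hilbert--Schmidt norm produced by the computation with the one appearing in the statement. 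A minor secondary issue is justifying the interchange of summation and integration in the derivation of $K_{T}$; this is handled by the same finite-support density argument.
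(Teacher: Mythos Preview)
Your proposal is correct and rests on the same underlying Plancherel/orthogonality structure as the paper, but the execution differs in one point worth noting. After extracting the same kernel $K_T$, the paper performs the change of variables $yH\mapsto xz^{-1}H$ so that, for fixed $xH$, the map $zH\mapsto K_T(xH,xz^{-1}H)$ is recognised as the inverse Fourier transform of $\tau(xH,\xi)=\sigma_T(xH,\xi)T_H^\xi$; Plancherel on $G/H$ then gives $\int|K_T(xH,\cdot)|^2\,d\mu=\sum_{[\xi]}d_\xi\|\sigma_T(xH,\xi)T_H^\xi\|_{S_2}^2$ in one stroke. You instead integrate $|K_T(xH,yH)|^2$ in $yH$ directly, using the orthogonality relations for the $H$-matrix coefficients $\Gamma_\pi(\cdot)_{ij}$ and the identity $\Gamma_\pi(xH)^*\Gamma_\pi(xH)=T_H^\pi$. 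Your route trades the change-of-variables step (which on a homogeneous space that is not a group deserves a word of justification) for a slightly longer index computation; the paper's route is shorter but leans on that substitution. Your observation that the orthogonality computation naturally produces $\|T_H^\pi\sigma_T(xH,\pi)T_H^\pi\|_{S_2}$ rather than $\|\sigma_T(xH,\pi)T_H^\pi\|_{S_2}$, and that this is resolved by the convention $\sigma_T=T_H^\pi\sigma_T$ implicit in \eqref{600}, is a genuine point that the paper's proof passes over silently.
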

		\begin{pf}
			%			Let \(\sigma\) be a measurable function on \(G/H \times \widehat{G/H}\) such that 	$$  \sum_{[\xi] \in \widehat{G/H}}  \int_{G/H}   d_{\xi}   \|\sigma(xH, \xi)\|_{S_{2}}^2  d\mu{(xH)} <\infty . $$ Then  
			For all  $f \in   L^2(G/H)$, we have
			\begin{align*}
			Tf(xH)&= \sum_{[\xi] \in \widehat{G/H}} d_{\xi} \operatorname{Tr}(\Gamma_{\xi}(x H) \sigma_T(xH, \xi) \hat{f}(\xi))\\
			&=\int_{G/H}  \sum_{[\xi] \in \widehat{G/H}} \,d_{\xi} \operatorname{Tr}(\Gamma_{\xi}(x H) \sigma_T(xH, \xi)  \Gamma_{\xi}(w H)^* ) {f}(wH) \,d\mu{(wH)}\\
			&=\int_{G/H} K(xH, wH) {f}(wH) \,d\mu{(wH)},
			\end{align*} where the kernel is given by $$ K(xH, wH)=\displaystyle \sum_{[\xi] \in \widehat{G/H}} d_{\xi} \operatorname{Tr}(\Gamma_{\xi}(x H) \sigma_T(xH, \xi)  \Gamma_{\xi}(w H)^* ), ~~xH, wH \in G/H.$$

			We have \begin{align*}
			\|T\|_{\mathrm{S_2}}^{2}&=\int_{G/H} \int_{G/H}|K(xH, yH)|^{2} \, d\mu( xH) \,d\mu( yH)\\
			&=\int_{G/H} \int_{G/H}|K(xH, xz^{-1}H)|^{2} \,d\mu( xH) \,d\mu( zH).
			\end{align*}
			Now
			\begin{align*}
			K(xH, xz^{-1}H) &=\displaystyle \sum_{[\xi] \in \widehat{G/H}} d_{\xi} \operatorname{Tr}\left (\Gamma_{\xi}(x H) \sigma_T(xH, \xi)  \Gamma_{\xi}(xz^{-1}H)^* \right )\\
			&=\displaystyle \sum_{[\xi] \in \widehat{G/H}} d_{\xi} \operatorname{Tr}\left (\Gamma_{\xi}(zH) \sigma_T(xH, \xi)  T_{H}^{\xi}\right )\\
			&= \mathcal{F}^{-1}\tau(xH, \cdot) ( zH),
			\end{align*} where $\tau(xH, \xi)=\sigma_T(xH, \xi)  T_{H}^{\xi}$.
			Therefore, using Plancherel's formula, we have
			\begin{align*}
			\|T\|_{\mathrm{S_2}}^{2}&=\int_{G/H} \int_{G/H}|K(xH, xz^{-1}H)|^{2} d\mu( xH) \,d\mu( zH)\\
			&=\int_{G/H} \int_{G/H}|\mathcal{F}^{-1}\tau(xH, \cdot) ( zH)|^{2} d\mu( xH)\, d\mu( zH)\\
			&=\int_{G/H}  \sum_{[\xi] \in \widehat{G/H}}   d_{\xi}   \|\tau(xH, \xi)\|_{S_{2}}^2   \, d\mu( xH)\\
			&=\int_{G/H}  \sum_{[\xi] \in \widehat{G/H}}   d_{\xi}   \| \sigma_T(xH, \xi)  T_{H}^{\xi} \|_{S_{2}}^2   \, d\mu( xH)\\
			&=\|\sigma_T\|_{L^2(G/H \times  \widehat{G/H})}.
			\end{align*}\end{pf}
		
		The following lemma is a consequence of the definition of Schatten classes (see \cite{Ruz}) which is needed to obtain our main result.
		\begin{lem}\label{100}  Let \(A : \mathcal{H} \rightarrow \mathcal{H}\) be a linear compact operator. Let \(0<r, t<\infty .\) Then
			\(A \in S_{r}(\mathcal{H})\) if and only if \(|A|^{\frac{r}{t}} \in S_{t}(\mathcal{H}) .\) Moreover, \(\|A\|_{S_{r}}^{r} =\||A|^{\frac{r}{t}} \|_{S_{t}}^{t}\).
		\end{lem}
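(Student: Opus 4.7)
The plan is to reduce the statement to a direct computation with singular values via the functional calculus for positive compact self-adjoint operators. Since $A$ is compact, the operator $|A|=(A^*A)^{1/2}$ is compact, positive and self-adjoint, so by the spectral theorem there is an orthonormal family $\{e_n\}$ in $\mathcal{H}$ such that
\[
|A| \;=\; \sum_{n=1}^{\infty} s_n(A)\, \langle \cdot , e_n\rangle\, e_n,
\]
where $s_n(A)$ are the singular values of $A$ (the eigenvalues of $|A|$ with multiplicities counted).

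Next I would apply the continuous functional calculus to the function $\lambda \mapsto \lambda^{r/t}$ (well-defined on $[0,\infty)$) to obtain
\[
|A|^{r/t} \;=\; \sum_{n=1}^{\infty} s_n(A)^{r/t}\, \langle \cdot , e_n\rangle\, e_n.
\]
This expression shows that $|A|^{r/t}$ is itself a compact, positive, self-adjoint operator whose eigenvalues are exactly $s_n(A)^{r/t}$. Since $|A|^{r/t}$ is positive self-adjoint, its singular values coincide with its eigenvalues, so $s_n\!\left(|A|^{r/t}\right) = s_n(A)^{r/t}$ for all $n$.

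From here the conclusion is a single line: by the definition of the Schatten quasi-norms,
\[
\bigl\| |A|^{r/t} \bigr\|_{S_t}^{t}
\;=\; \sum_{n=1}^{\infty} s_n\!\left(|A|^{r/t}\right)^{t}
\;=\; \sum_{n=1}^{\infty} s_n(A)^{r}
\;=\; \|A\|_{S_r}^{r}.
\]
Thus one sum is finite if and only if the other is, which gives $A\in S_r(\mathcal{H}) \iff |A|^{r/t}\in S_t(\mathcal{H})$, and the identity of norms is immediate. There is no real obstacle here; the only delicate point is making sure the functional calculus is justified (which it is, because $|A|$ is a positive compact self-adjoint operator and the function $\lambda\mapsto \lambda^{r/t}$ is continuous on the spectrum $\{s_n(A)\}\cup\{0\}\subset[0,\infty)$), and being explicit that the singular values of a positive self-adjoint operator equal its eigenvalues.
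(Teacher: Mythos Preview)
Your argument is correct and is exactly the natural unpacking of what the paper intends: the paper does not give a proof of this lemma but simply records it as ``a consequence of the definition of Schatten classes'' with a reference to \cite{Ruz}. Your spectral-theorem/functional-calculus computation showing $s_n(|A|^{r/t})=s_n(A)^{r/t}$ and then reading off the identity $\||A|^{r/t}\|_{S_t}^t=\sum_n s_n(A)^r=\|A\|_{S_r}^r$ is precisely the one-line justification behind that remark, so there is no divergence in approach.
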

		The corollary below is the main result of this section which present a characterization of a  pseudo-differential operator on $L^2(G/H)$ to be a Schatten class operator. The proof follows from Lemma \ref{100} with $t=2$ and Theorem \ref{110}.
		\begin{cor}
			Let $T : L^2 (G/H ) \rightarrow L^2 (G/H )$ be a continuous linear operator with the matrix-valued symbol \(\sigma_T\) on \(G/H \times \widehat{G/H}\).  Then $T\in S_{r}\left(L^{2}(G/H)\right)  $ if and only if 
			$$  \int_{G/H}  \sum_{[\xi] \in \widehat{G/H}}d_{\xi}  \|\sigma_{|T|^{\frac{r}{2}}}(xH, \xi) T_{H}^{\xi} \|_{S_{2}}^2  \, d\mu{(xH)}<\infty.$$
			%	In particular, for operators $T$ with matrix-valued symbols $\sigma_T(\xi)$ depending only on $\xi$, we have that $T$ is Hilbert-Schmidt if and only if
			%	$$  \sum_{[\xi] \in \widehat{G/H}}d_{\xi}  \|\sigma_{|T|^{\frac{r}{2}}}( \xi) T_{H}^{\xi} \|_{S_{2}}^2  <\infty.$$
		\end{cor}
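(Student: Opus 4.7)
The corollary is essentially a two-line consequence of the two results cited just before it, so my plan is to make precise exactly how Lemma \ref{100} and Theorem \ref{110} fit together.

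First, I would apply Lemma \ref{100} with the choice $t=2$. This immediately yields the equivalence
\[
T \in S_r(L^2(G/H)) \iff |T|^{r/2} \in S_2(L^2(G/H)),
\]
together with the quantitative identity $\|T\|_{S_r}^r = \||T|^{r/2}\|_{S_2}^{2}$. Thus the original Schatten-$r$ membership question is reduced to a Hilbert--Schmidt question about a different operator, namely $|T|^{r/2}$.

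Next, I would invoke Theorem \ref{110} applied to the operator $|T|^{r/2}$. The theorem tells us that a (bounded) operator on $L^2(G/H)$ with matrix-valued symbol $\sigma_{|T|^{r/2}}$ is Hilbert--Schmidt if and only if its symbol lies in $L^2(G/H \times \widehat{G/H})$, i.e.
\[
\int_{G/H} \sum_{[\xi]\in\widehat{G/H}} d_\xi \,\|\sigma_{|T|^{r/2}}(xH,\xi)\,T_H^\xi\|_{S_2}^{2}\, d\mu(xH) < \infty,
\]
with equality of norms. Chaining this with the equivalence from the previous step produces exactly the characterization stated in the corollary, and also gives the norm identity $\|T\|_{S_r}^r = \|\sigma_{|T|^{r/2}}\|_{L^2(G/H\times \widehat{G/H})}^{2}$ as a bonus.

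The only genuine point to check, and the place where I expect to need a small justification rather than a one-liner, is that the operator $|T|^{r/2}$ legitimately has a matrix-valued symbol in the sense defined in Section 2 so that Theorem \ref{110} can be applied to it. The symbol was introduced for continuous linear maps $C(G/H)\to C(G/H)$, whereas $|T|^{r/2}$ is a priori only a bounded operator on $L^2(G/H)$. However, when it is Hilbert--Schmidt the proof of Theorem \ref{110} shows that such an operator is represented by an $L^2$-kernel, and the quantization formula \eqref{vishquant} applied to that kernel yields a well-defined matrix-valued symbol $\sigma_{|T|^{r/2}}(xH,\xi)$; conversely, any symbol in $L^2(G/H\times \widehat{G/H})$ defines a Hilbert--Schmidt operator via \eqref{vishquant}. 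Once this interpretation is in place, the two-step argument above closes the proof in both directions.
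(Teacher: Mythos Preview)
Your proposal is correct and matches the paper's own argument exactly: the paper states that ``The proof follows from Lemma \ref{100} with $t=2$ and Theorem \ref{110},'' which is precisely your two-step reduction. Your extra care about why $|T|^{r/2}$ admits a matrix-valued symbol is more than the paper itself provides, but it does not deviate from the intended approach.
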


		\section{Characterizations and traces of $r$-nuclear, $0<r \leq 1,$ pseudo-differential operators on $L^p(G/H)$}
		
		This section is devoted to study of $r$-nuclear operators on Banach spaces $L^p(G/H).$ Here we present a symbolic  characterization of $r$-nuclear operators and give a formula for  the nuclear trace of such operator. We will begin this section by recalling the basic notions of nuclear operators on Banach spaces.
		
		Let $0<r \leq 1$ and   \(T\) be a bounded linear operator from a complex Banach space \(X\) into
		another complex Banach space  \(Y\) such that there exist sequences \(\left\{x_{n}^{\prime}\right\}_{n=1}^{\infty}\) in
		the dual space \(X^{\prime}\) of \(X\) and \(\left\{y_{n}\right\}_{n=1}^{\infty}\) in \(Y\) such that
		$
		\sum_{n=1}^{\infty}\left\|x_{n}^{\prime}\right\|_{X^{\prime}}^r\left\|y_{n}\right\|_{Y}^r<\infty
		$
		and $$
		T x=\sum_{n=1}^{\infty} x_{n}^{\prime}(x) y_{n}, \quad x \in X.
		$$
		Then we call \(T : X \rightarrow Y\) a $r$-nuclear operator and if \(X=Y,\) then its nuclear trace  \(\operatorname{Tr}(T)\)
		is given by
		$$
		\operatorname{Tr}(T)=\sum_{n=1}^{\infty} x_{n}^{\prime}\left(y_{n}\right).
		$$
		It can be proved that the definition of a nuclear operator and the definition of
		the trace of a $r$-nuclear operator are independent of the choices of the sequences
		\(\left\{x_{n}^{\prime}\right\}_{n=1}^{\infty}\) and \(\left\{y_{n}\right\}_{n=1}^{\infty} .\) The following theorem is a characterization of $r$-nuclear operators on $\sigma$-finite measure spaces \cite{D2}.
		\begin{thm}\label{11}
			Let $0<r \leq 1$. Let \(\left(X_{1}, \mu_{1}\right)\) and \(\left(X_{2}, \mu_{2}\right)\) be \(\sigma\)-finite measure spaces. Then \(a\)
			bounded linear operator \(T : L^{p_{1}}\left(X_{1}, \mu_{1}\right) \rightarrow L^{p_{2}}\left(X_{2}, \mu_{2}\right), 1 \leq p_{1}, p_{2}<\infty,\) is
			$r$-nuclear if and only if there exist sequences \(\left\{g_{n}\right\}_{n=1}^{\infty}\) in \(L^{p_{1}'}\left(X_{1}, \mu_{1}\right)\) and \(\left\{h_{n}\right\}_{n=1}^{\infty}\)
			in \(L^{p_{2}}\left(X_{2}, \mu_{2}\right)\) such that for all \(f \in L^{p_{1}}\left(X_{1}, \,\mu_{1}\right)\)
			
			$$
			(T f)(x)=\int_{X_{1}} K(x, y) f(y) d \mu_{1}(y), \quad x \in X_{2},
			$$
			where
			$$
			K(x, y)=\sum_{n=1}^{\infty} h_{n}(x) g_{n}(y), \quad x \in X_{2}, y \in X_{1}
			$$ and
			$$
			\sum_{n=1}^{\infty}\left\|g_{n}\right\|_{L^{{p_{1}}^{\prime}}\left(X_{1}, \mu_{1}\right)}^r\left\|h_{n}\right\|_{L^{p_{2}}\left(X_{2}, \mu_{2}\right)}^r<\infty.
			$$
		\end{thm}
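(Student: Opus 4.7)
The plan is to prove both implications via the canonical duality isomorphism $(L^{p_1}(X_1,\mu_1))' \cong L^{p_1'}(X_1,\mu_1)$, which is valid precisely because $1 \le p_1 < \infty$ and $\mu_1$ is $\sigma$-finite. Everything else is bookkeeping on the $r$-summable series together with a Fubini-type interchange.

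For the forward direction, suppose $T$ is $r$-nuclear, so by definition there exist $\{x_n'\} \subset (L^{p_1}(X_1,\mu_1))'$ and $\{y_n\} \subset L^{p_2}(X_2,\mu_2)$ with $\sum_n \|x_n'\|^r \|y_n\|^r < \infty$ and $Tf = \sum_n x_n'(f)\, y_n$. By the duality isomorphism, each $x_n'$ is represented uniquely by some $g_n \in L^{p_1'}(X_1,\mu_1)$ via $x_n'(f) = \int_{X_1} g_n(y) f(y)\, d\mu_1(y)$, with $\|g_n\|_{L^{p_1'}} = \|x_n'\|$. Setting $h_n := y_n$ gives the required summability $\sum_n \|g_n\|_{L^{p_1'}}^r \|h_n\|_{L^{p_2}}^r < \infty$. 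I would then formally write
\begin{equation*}
(Tf)(x) \;=\; \sum_{n=1}^\infty h_n(x) \int_{X_1} g_n(y) f(y)\, d\mu_1(y)
\end{equation*}
and swap sum and integral to obtain $(Tf)(x) = \int_{X_1} K(x,y) f(y)\, d\mu_1(y)$ with $K(x,y) = \sum_n h_n(x) g_n(y)$. The interchange has to be justified with some care, since the series for $K$ need not converge absolutely at every pair $(x,y)$; one argues that $\sum_n |h_n(x)|\,|g_n(y)|$ is integrable against $|f|$ for $\mu_2$-a.e.\ $x$ (using Hölder against $g_n$ together with either $r \le 1 \le 2$ or simply the fact that $r$-summable sequences are $1$-summable for $r \le 1$), so Fubini–Tonelli applies.

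For the converse, suppose $T$ is given by a kernel of the stated form with $\sum_n \|g_n\|_{L^{p_1'}}^r \|h_n\|_{L^{p_2}}^r < \infty$. Define $x_n' \in (L^{p_1}(X_1,\mu_1))'$ by $x_n'(f) := \int_{X_1} g_n(y) f(y)\, d\mu_1(y)$ and set $y_n := h_n$. Hölder's inequality gives $\|x_n'\| = \|g_n\|_{L^{p_1'}}$, so the summability hypothesis transfers verbatim to the $r$-nuclear summability. The absolute convergence (again using $r \le 1$) of $\sum_n \|x_n'\|\,\|y_n\|$ in particular guarantees that $\sum_n x_n'(f)\, y_n$ converges in $L^{p_2}(X_2,\mu_2)$ for every $f \in L^{p_1}(X_1,\mu_1)$, and a Fubini argument identifies its pointwise representative with $\int_{X_1} K(x,y) f(y)\, d\mu_1(y) = (Tf)(x)$. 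Hence $T$ admits the $r$-nuclear decomposition $Tf = \sum_n x_n'(f)\, y_n$.

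The only genuine obstacle is the Fubini/interchange step: one must make sure the iterated sum-integral in both directions really does represent the same operator almost everywhere, which requires verifying that $(x,y) \mapsto \sum_n |h_n(x) g_n(y)|$ is finite $(\mu_2 \otimes \mu_1)$-a.e.\ and integrable against $|f|$ in the second variable for a.e.\ $x$. This follows because $r \le 1$ implies $1$-summability of the scalar series $\sum_n \|g_n\|_{L^{p_1'}} \|h_n\|_{L^{p_2}}$, whence $\sum_n \|h_n(\cdot) g_n(\cdot)\|_{L^{p_2} \otimes L^{p_1'}} < \infty$, so the kernel $K$ is well-defined and the subsequent swap of sum and integral is legitimate.
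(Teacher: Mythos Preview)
The paper does not give its own proof of this theorem; it is quoted as a known result from Delgado \cite{D2}, so there is nothing in the paper to compare your argument against. Your approach via the duality isomorphism $(L^{p_1})' \cong L^{p_1'}$ together with the inclusion $\ell^r \subset \ell^1$ for $0<r\le 1$ is exactly the standard route and is correct in outline.

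One small point worth tightening: the statement asserts a pointwise identity $K(x,y)=\sum_n h_n(x)g_n(y)$, and you flag the interchange step as the only real issue. You are right that $r$-summability gives $\sum_n \|g_n\|_{p_1'}\|h_n\|_{p_2}<\infty$; from this one deduces that $\sum_n |h_n(x)|\,\|g_n\|_{p_1'}$ converges in $L^{p_2}(X_2)$ and hence is finite for $\mu_2$-a.e.\ $x$, which is enough to justify, for such $x$ and every $f\in L^{p_1}$, the absolute convergence of $\sum_n h_n(x)\int g_n f$ and the swap with the integral. The claim that $\sum_n |h_n(x)g_n(y)|<\infty$ for $(\mu_2\otimes\mu_1)$-a.e.\ $(x,y)$ is slightly stronger and not strictly needed for the operator identity; it does hold, but the cleanest way to see it is to observe that $\sum_n \|\,|h_n|\otimes |g_n|\,\|_{L^1(\mu_2\otimes\mu_1)} = \sum_n \|h_n\|_1\|g_n\|_1 \le \sum_n \|h_n\|_{p_2}\|g_n\|_{p_1'}<\infty$ (using that the spaces have finite or $\sigma$-finite measure only guarantees the inequality when the measures are finite; in the general $\sigma$-finite case one works locally or simply interprets $K$ as the a.e.\ limit where it exists). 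Either way, your sketch captures the full content of the proof.
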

		%The function \(K\) on \(X_{2} \times X_{1}\) in Theorem \ref{11} is called the kernel of the
		%	$r$-nuclear operator \(T : L^{p_{1}}\left(X_{1}, \mu_{1}\right) \rightarrow L^{p_{2}}\left(X_{2}, %\mu_{2}\right) .\)

		Let $0<r \leq 1$. Let \((X, \mu)\) be a \(\sigma\)-finite measure space. Let \(T : L^{p}(X, \mu) \rightarrow L^{p}(X, \mu)\),
		\(1 \leq p<\infty,\) be a $r$-nuclear operator. Then by Theorem \ref{11}, we can find sequences
		\(\left\{g_{n}\right\}_{n=1}^{\infty}\) in \(L^{p^{\prime}}(X, \mu)\) and \(\left\{h_{n}\right\}_{n=1}^{\infty}\) in \(L^{p}(X, \mu)\) such that $$
		\sum_{n=1}^{\infty}\left\|g_{n}\right\|_{L^{p^{\prime}}(X, \mu)}^r\left\|h_{n}\right\|_{L^{p}(X, \mu)}^r<\infty
		$$ 
		and for all \(f \in L^{p}(X, \mu)\),\,\,
		$$\begin{aligned}(T f)(x) &=\int_{X} K(x, y) f(y) \,d \mu(y), \quad x \in X, \end{aligned}$$
		where
		$$
		K(x, y)=\sum_{n=1}^{\infty} h_{n}(x) g_{n}(y), \quad x, y \in X
		$$ and it satisfies 
		%	If $X_1=X_2=X, p_1=p_2=p$ and $\mu_1=\mu_2=\mu$ is a $\sigma-$finite  measure, then for almost all $x\in X,$
		$$\int_{X} |K(x, y)|~d\mu(y)\leq \sum_{n=1}^{\infty } \left\|g_{n}\right\|_{L^{p^{\prime}}(X, \mu)}^r\left\|h_{n}\right\|_{L^{p}(X, \mu)}^r.$$
		
		The nuclear trace  $\operatorname{Tr}(T)$  of   $T : L^{p}(X, \mu)  \rightarrow L^{p}(X, \mu)$  is given by  
		\begin{align}\label{tr}
		\operatorname{Tr}(T) =\int_{X} K(x, x) \,d \mu(x). 
		\end{align}
		
		Now, we present a characterization of $r$-nuclear pseudo-differential operators from \(L^{p_{1}}(G/H)\) into \(L^{p_{2}}(G/H)\). 
		
		\begin{thm}
			Let $0<r \leq 1$ and let \(T : L^{p_{1}}(G/H) \rightarrow L^{p_{2}}(G/H), 1 \leq p_{1}, p_{2}<\infty,\)  be a continuous linear operator with the matrix-valued symbol \(\sigma_T\) on \(G/H \times \widehat{G/H}\). Suppose that the symbol  $\sigma_T$ satisfies 
			$$\sum_{[\xi] \in \widehat{G/H}} d_\xi^{2+\frac{r}{\tilde{p}_1}}\|\|\sigma_T(\cdot, \xi)^t  \|_{op(\ell^\infty, \, \ell^\infty)}\|^r_{L^{p_2}(G/H)}<\infty,$$ where $\tilde{p_1}=\text{min}\{2, p_1\}.$ Then the operator  $T$ is $r$-nuclear. 
		\end{thm}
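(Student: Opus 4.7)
The plan is to invoke Theorem \ref{11} by exhibiting an explicit integral-kernel decomposition of $T$ of the required nuclear form. First, using the quantization formula \eqref{vishquant} and substituting the Fourier coefficient $\hat f(\pi)=\int_{G/H}f(yH)\Gamma_\pi(yH)^*\,d\mu(yH)$, I will expand the trace and rewrite $Tf(xH)=\int_{G/H}K(xH,yH)f(yH)\,d\mu(yH)$ with
$$K(xH,yH)=\sum_{[\pi]\in\widehat{G/H}} d_\pi\sum_{i,j,k=1}^{d_\pi}\Gamma_\pi(xH)_{ij}\,\sigma_T(xH,\pi)_{jk}\,\overline{\Gamma_\pi(yH)_{ik}}.$$
Grouping the $j$-sum into the $x$-factor and indexing by the countable set $\{n=(\pi,i,k)\}$, I will set
$$h_n(xH):=d_\pi\sum_{j=1}^{d_\pi}\Gamma_\pi(xH)_{ij}\,\sigma_T(xH,\pi)_{jk},\qquad g_n(yH):=\overline{\Gamma_\pi(yH)_{ik}},$$
so that $K(xH,yH)=\sum_n h_n(xH)g_n(yH)$.

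Next I must bound $\|h_n\|_{L^{p_2}}$ and $\|g_n\|_{L^{p_1'}}$, where $p_1'$ is the conjugate exponent of $p_1$. For the $h_n$ factor, since $\Gamma_\pi(xH)=\pi(x)T_H^\pi$ is a contraction on $\mathcal H_\pi$, each matrix entry satisfies $|\Gamma_\pi(xH)_{ij}|\le 1$; this yields the pointwise estimate
$$|h_n(xH)|\le d_\pi\sum_{j=1}^{d_\pi}|\sigma_T(xH,\pi)_{jk}|\le d_\pi\,\|\sigma_T(xH,\pi)^t\|_{op(\ell^\infty,\ell^\infty)},$$
since the final quantity is precisely the maximal absolute row sum of $\sigma_T(xH,\pi)^t$, equivalently the maximal column sum of $\sigma_T(xH,\pi)$. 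For $g_n$, I invoke Lemma \ref{vis2.4} with $q=p_1'$, obtaining $\|g_n\|_{L^{p_1'}}\le d_\pi^{-1/p_1'}$ when $p_1'\ge 2$ and $\|g_n\|_{L^{p_1'}}\le d_\pi^{-1/2}$ when $p_1'\le 2$.

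Finally I will assemble these bounds. Summing over the $d_\pi^2$ pairs $(i,k)$ for each fixed $[\pi]$ and splitting into the two regimes of $p_1$, the aggregate exponent on $d_\pi$ works out to $2+r(1-1/p_1')=2+r/p_1$ when $p_1\le 2$ (so $\tilde p_1=p_1$), and to $2+r-r/2=2+r/2$ when $p_1\ge 2$ (so $\tilde p_1=2$). Both cases collapse to the unified bound
$$\sum_n\|h_n\|_{L^{p_2}}^r\|g_n\|_{L^{p_1'}}^r\le\sum_{[\pi]\in\widehat{G/H}}d_\pi^{2+r/\tilde p_1}\,\bigl\|\,\|\sigma_T(\cdot,\pi)^t\|_{op(\ell^\infty,\ell^\infty)}\,\bigr\|_{L^{p_2}(G/H)}^r,$$
which is finite by hypothesis, so Theorem \ref{11} yields that $T$ is $r$-nuclear.

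The main subtlety I expect is the bookkeeping of the $d_\pi$-exponents: one has to combine a factor of $d_\pi^2$ from the $(i,k)$ sum, a factor of $d_\pi^r$ from the leading $d_\pi$ inside $h_n$, and the two separate cases of Lemma \ref{vis2.4} for $\|g_n\|_{L^{p_1'}}$, and verify that the two regimes of $p_1'$ realign into the single clean exponent $2+r/\tilde p_1$. The decision to contract the $j$-index into $h_n$ (rather than into $g_n$) is what forces the transpose operator norm $\|\sigma_T(\cdot,\pi)^t\|_{op(\ell^\infty,\ell^\infty)}$ appearing in the hypothesis; the opposite contraction would produce a different operator norm, so matching the stated assumption requires this particular grouping.
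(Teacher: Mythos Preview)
Your proof is correct and follows essentially the same route as the paper: write the kernel via the quantization formula, decompose it as $K(xH,yH)=\sum_{[\pi],i,k}h_{\pi,i,k}(xH)\,g_{\pi,i,k}(yH)$ with $h_{\pi,i,k}=d_\pi(\Gamma_\pi\sigma_T)_{ik}$ and $g_{\pi,i,k}=\overline{\Gamma_\pi(\cdot)_{ik}}$, bound $|h_{\pi,i,k}|$ by $d_\pi\|\sigma_T^t\|_{op(\ell^\infty,\ell^\infty)}$ using $|\Gamma_\pi(xH)_{ij}|\le 1$, bound $\|g_{\pi,i,k}\|_{L^{p_1'}}$ by Lemma~\ref{vis2.4}, and assemble the $d_\pi$-exponents. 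The only cosmetic differences are your choice of index names and your explicit case split on $p_1\lessgtr 2$ in place of the paper's use of the conjugate exponent $\tilde q_1=\max\{2,p_1'\}$.
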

		\begin{proof} Since the operator $T$ can be written as \begin{align*}
			Tf(xH)&= \int_{G/H}  \sum_{[\xi] \in \widehat{G/H}} \,d_{\xi} \operatorname{Tr}(\Gamma_{\xi}(x H) \sigma_T(xH, \xi)  \Gamma_{\xi}(w H)^* ) {f}(wH) \,d\mu{(wH)},
			\end{align*} the kernel of $T$ is given by $$ K(xH, wH)=\displaystyle \sum_{[\xi] \in \widehat{G/H}} d_{\xi} \operatorname{Tr}(\Gamma_{\xi}(x H) \sigma_T(xH, \xi)  \Gamma_{\xi}(w H)^* ).$$
			
			Now we write $$ \operatorname{Tr}(\Gamma_{\xi}(x H) \sigma_T (xH, \xi)  \Gamma_{\xi}(wH)^* ) = \sum_{i,j=1}^{d_{\xi}} (\Gamma_\xi(xH) \sigma_T(xH, \xi))_{ij}  \overline{\Gamma_{\xi}(wH)}_{ij}, $$ and set that 
			$h_{\xi, ij}(xH)= d_\xi (\Gamma_\xi(xH) \sigma_T(xH, \xi))_{ij} $ and $g_{\xi, ij}(wH)= (\Gamma_\xi(wH)^*)_{ji} = \overline{\Gamma_{\xi}(wH)}_{ij}.$
			
			We observe that 
			$$(\Gamma_\xi(xH) \sigma_T(xH, \xi))_{ij} = \sum_{k=1}^{d_\xi} \Gamma_\xi(xH)_{ik}\, \sigma_T(xH, \xi)_{kj}= \sum_{k=1}^{d_\xi} (\sigma_T(xH, \xi))^t_{jk} \, \Gamma_\xi(xH)_{ik} .$$
			
			By taking into account that $|\Gamma_{\xi}(xH)_{ik}| \leq 1,$ we get 
			\begin{align*}
			|(\Gamma_\xi(xH) \sigma_T(xH, \xi))_{ij}| &= \left|\sum_{k=1}^{d_\xi} (\sigma_T(xH, \xi))^t_{jk} \, \Gamma_\xi(xH)_{ik}   \right| \\ &\leq \|\sigma_T(xH, \xi)^t\|_{op(\ell^\infty, \ell^\infty)} \|(\Gamma_\xi(xH)_{i1}, \Gamma_\xi(xH)_{i2}, \ldots, \Gamma_\xi(xH)_{id_{\xi}})\|_{\ell^\infty} \\& \leq  \|\sigma_T(xH, \xi)^t\|_{op(\ell^\infty, \ell^\infty)}.
			\end{align*}
			Therefore we have 
			\begin{align*}
			\|h_{\xi, ij}(\cdot)\|_{L^{p_2}(G/H)}^r &= \| d_\xi \,(\Gamma_\xi(\cdot) \sigma_T(\cdot, \xi))_{ij}\|_{L^{p_2}(G/H)}^r \\ & \leq d_{\xi}^r \|   \|\sigma_T(\cdot, \xi)^t\|_{op(\ell^\infty, \ell^\infty)}\|_{L^{p_2}(G/H)}.
			\end{align*}
			If $p_1'$ denotes the Lebesgue conjugate of $p_1$ then we have $\frac{1}{\tilde{p_1}}+\frac{1}{\tilde{q_1}}=1$ where $\tilde{q_1}=\text{max}\{2, p_1'\}.$ By Lemma \ref{vis2.4} we have $\|\Gamma_\xi(\cdot)\|_{L^{p_1'}(G/H)}^r \leq d_\xi^{-\frac{r}{\tilde{p}_1'}}.$
			Therefore, 
			\begin{align*}
			\sum_{[\xi], i, j} \|g_{\xi, ij}(\cdot)\|_{L^{p_1'}(G/H)}^r \|h_{\xi, ij}(\cdot)\|_{L^{p_2}(G/H)}^r &\leq \sum_{[\xi]} d_\xi^{-\frac{r}{\tilde{p}_1'}} d_{\xi}^r  d_\xi^2\|   \|\sigma_T(\cdot, \xi)^t\|_{op(\ell^\infty, \ell^\infty)}\|_{L^{p_2}(G/H)} \\ & \leq \sum_{[\xi]} d_\xi^{2+\frac{r}{\tilde{p}_1}}   \|   \|\sigma_T(\cdot, \xi)^t\|_{op(\ell^\infty, \ell^\infty)}\|_{L^{p_2}(G/H)} <\infty.
			\end{align*}
			Hence, by invoking  Theorem \ref{11} it follows that $T$ is $r$-nuclear.  
		\end{proof}  
		
		Next theorem gives the necessary and sufficient conditions for an operator to be $r$-nuclear in  terms of symbol decomposition. 		
		
		\begin{thm}\label{55}
			Let $0<r \leq 1$ and let \(T : L^{p_{1}}(G/H) \rightarrow L^{p_{2}}(G/H), 1 \leq p_{1}, p_{2}<\infty,\)  be a continuous linear operator with the matrix-valued symbol \(\sigma_T\) on \(G/H \times \widehat{G/H}\). Then $T$ is $r$-nuclear if and only if there exist sequences \(\left\{g_{k}\right\}_{k=1}^{\infty} \in L^{p_{1}^{\prime}}(G/H)\)
			and \(\left\{h_{k}\right\}_{k=1}^{\infty} \in L^{p_{2}}(G/H)\) such that
			$$
			\sum_{k=1}^{\infty}\left\|g_{k}\right\|_{ L^{p_{1}^{\prime}}(G/H)}^r\left\|h_{k}\right\|_{ L^{p_{2}}(G/H)}^r<\infty
			$$ and 
			$$T_{H}^{\xi} \sigma_T(xH, \xi) =\xi(x)^* \sum_{k=1}^{\infty} h_{k}(xH) { \widehat{\overline{g_k}}(\xi) ^*}, ~~ (xH, \xi) \in G / H \times \widehat{G / H}.
			$$
		\end{thm}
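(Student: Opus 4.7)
The plan is to exploit the one-to-one correspondence between the symbolic description \eqref{600} of $T$ and its kernel representation, and then invoke Delgado's characterization (Theorem \ref{11}) to translate between ``kernel factorizes'' and ``operator is $r$-nuclear.''

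For the sufficiency direction, I would assume the claimed decomposition $T_H^\xi \sigma_T(xH,\xi) = \xi(x)^* \sum_k h_k(xH)\, \widehat{\overline{g_k}}(\xi)^*$ with the stated $L^{p_1'}$--$L^{p_2}$ summability. Substituting into the quantization formula \eqref{vishquant} and using $\Gamma_\xi(xH) = \xi(x) T_H^\xi$ collapses the factor $\xi(x)\xi(x)^*$ to the identity, leaving
$$Tf(xH) = \sum_{k} h_k(xH) \sum_{[\xi] \in \widehat{G/H}} d_\xi \operatorname{Tr}\bigl(\widehat{\overline{g_k}}(\xi)^* \hat f(\xi)\bigr).$$
The inner sum is identified with $\int_{G/H} g_k(wH) f(wH)\, d\mu(wH)$ via Parseval's identity on $G/H$. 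Hence $Tf = \sum_k x_k'(f)\, h_k$, where $x_k'(f) := \int g_k f\, d\mu$ is a continuous linear functional on $L^{p_1}(G/H)$ with norm $\|g_k\|_{L^{p_1'}}$. The assumed summability then gives $\sum_k \|x_k'\|^r \|h_k\|^r < \infty$, which is exactly the definition of an $r$-nuclear operator.

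For the necessity direction, since $T$ is $r$-nuclear, Theorem \ref{11} furnishes sequences $\{g_k\}\subset L^{p_1'}(G/H)$ and $\{h_k\}\subset L^{p_2}(G/H)$ with $\sum_k \|g_k\|^r\|h_k\|^r<\infty$ and $Tf(xH)=\int_{G/H}K(xH,wH)f(wH)\,d\mu(wH)$, where $K(xH,wH)=\sum_k h_k(xH)g_k(wH)$. To recover the symbol I would apply $T$ componentwise to $\Gamma_\xi$: using the kernel representation,
$$(T\Gamma_\xi(xH))_{mn} = \sum_{k} h_k(xH) \int_{G/H} g_k(wH)\, \Gamma_\xi(wH)_{mn}\, d\mu(wH).$$
The conjugation identity $(\Gamma_\xi(wH)^*)_{nm}=\overline{\Gamma_\xi(wH)_{mn}}$ combined with the definition \eqref{ft} of the Fourier transform yields $\int g_k(wH)\Gamma_\xi(wH)_{mn}\,d\mu(wH) = \widehat{\overline{g_k}}(\xi)^*_{mn}$. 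Thus $(T\Gamma_\xi)(xH) = \sum_k h_k(xH)\,\widehat{\overline{g_k}}(\xi)^*$, and the defining relation \eqref{600}, $T_H^\xi \sigma_T(xH,\xi)=\xi(x)^*(T\Gamma_\xi)(xH)$, produces exactly the required formula.

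The main technical hurdle is justifying the interchange of the summation in $k$ with the integral in $w$ in the necessity step, and with the Fourier series in $\xi$ in the sufficiency step. In both cases the absolute summability $\sum_k \|g_k\|_{L^{p_1'}}^r \|h_k\|_{L^{p_2}}^r<\infty$, together with the bound $|\Gamma_\xi(\cdot)_{mn}|\leq 1$ from Lemma \ref{vis2.4}, legitimizes these exchanges via dominated convergence after first testing on the dense subspace $C(G/H)$, on which the quantization formula \eqref{vishquant} applies term by term. Once these interchanges are secured the remaining computation is purely algebraic.
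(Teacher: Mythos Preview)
Your proposal is correct and follows essentially the same route as the paper: both directions hinge on Delgado's characterization (Theorem~\ref{11}) together with the relation \eqref{600} between symbol and $T\Gamma_\xi$. The only cosmetic difference is that for sufficiency the paper unpacks the identity $\sum_{[\xi]} d_\xi \operatorname{Tr}(\widehat{\overline{g_k}}(\xi)^*\hat f(\xi))=\int g_k f\,d\mu$ by hand via Fourier inversion, whereas you invoke it as Parseval, and for necessity the paper tests the two representations of $Tf$ on the particular matrix coefficients $f=\Gamma_\xi(\cdot)_{nm}$ instead of appealing directly to \eqref{600}; these amount to the same computation.
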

		\begin{pf}
			Suppose that \(T : L^{p_{1}}(G/H) \rightarrow L^{p_{2}}(G/H)\) is $r$-nuclear, where \(1 \leq p_{1}, p_{2}<\infty .\) Then by Theorem \ref{11}, there exist sequences \(\left\{g_{k}\right\}_{k=1}^{\infty}\) in \(L^{p_{1}^{\prime}}(G/H)\) and \(\left\{h_{k}\right\}_{k=1}^{\infty}\) in \(L^{p_{2}}(G/H)\) such that
			$$
			\sum_{k=1}^{\infty}\left\|g_{k}\right\|_{ L^{p_{1}^{\prime}}(G/H)}^r\left\|h_{k}\right\|_{ L^{p_{2}}(G/H)}^r<\infty
			$$
			and for all $f \in L^{p_{1}}(G/H) $ we have 
			\begin{align}\label{1}\nonumber
			\left(T f\right)(x H)
			&=\sum_{[\pi] \in \widehat{G/H}} d_{\pi} \operatorname{Tr}(\Gamma_{\pi}(x H) \sigma_T(xH, \pi) \hat{f}(\pi))\\\nonumber
			&=\sum_{[\pi] \in \widehat{G/H}} d_{\pi} \sum_{i,j=1}^{d_{\pi}}(\Gamma_{\pi}(x H) \sigma_T(xH, \pi))_{ij} \hat{f}(\pi)_{ji}\\\nonumber
			&=\int_{G/H}\sum_{[\pi] \in \widehat{G/H}} d_{\pi} \sum_{i,j=1}^{d_{\pi}}(\Gamma_{\pi}(x H) \sigma_T(xH, \pi))_{ij} \overline{\Gamma_{\pi}(w H)_{ij}} f(wH)~ d\mu(wH) \\
			&=\int_{G/H} \left(  \sum_{k=1}^{\infty} h_{k}(xH) g_{k}(wH)\right)   f(wH)~ d\mu(wH) 
			\end{align}
			for all \(xH \in G/H .\) Let \(\xi\) be a fixed but arbitrary element in \(\widehat{G/H} .\) Then for \(1 \leq\)
			\(m, n \leq d_{\xi} \) we define the function \(f\) on \(G/H\) by
			$$
			f(wH)=\Gamma_{\xi}(wH)_{nm}, \quad wH \in G/H.
			$$
			Since $$\int_{G/H} {\Gamma_{\xi}(wH)_{nm}}\overline{\Gamma_{\pi}(wH)_{ij}}  ~d\mu(wH)=\frac{1}{d_{\xi}}$$ if and only if $\pi=\xi$, $i=n$ and $j=m,$ and is zero otherwise, it follows from (\ref{1}) that	
			\begin{align*}
			(\Gamma_{\xi}(x H) \sigma_T(xH, \xi))_{nm}&= \sum_{k=1}^{\infty} h_{k}(xH) \left( \int_{G/H} g_{k}(wH)~ \Gamma_{\xi}(wH)_{nm} ~d\mu(wH)\right)\\
			&= \sum_{k=1}^{\infty} h_{k}(xH)  \overline{\left ( \widehat{\overline{g_k}}(\xi)\right )_{mn}}.
			%~~   (xH, \xi, m, n ) \in G / H \times \widehat{G / H} \times \mathbb{N} \times \mathbb{N}. 
			\end{align*}
			Therefore,  \begin{align*}
			T_{H}^{\xi} \sigma_T(xH, \xi) =\xi(x)^*  \sum_{k=1}^{\infty} h_{k}(xH)  { \widehat{\overline{g_k}}(\xi) ^*}, ~~(xH, \xi) \in G / H \times \widehat{G / H}.
			\end{align*}
			
			Conversely, suppose that there exist sequences \(\left\{g_{k}\right\}_{k=1}^{\infty}\) in \(L^{p_{1}^{\prime}}(G/H)\) and \(\left\{h_{k}\right\}_{k=1}^{\infty}\)
			in \(L^{p_{2}}(G/H)\) such that
			$$
			\sum_{k=1}^{\infty}\left\|g_{k}\right\|_{ L^{p_{1}^{\prime}}(G/H)}^r\left\|h_{k}\right\|_{ L^{p_{2}}(G/H)}^r<\infty
			$$
			and
			$$
			T_{H}^{\xi} \sigma_T(xH, \xi) =\xi(x)^*  \sum_{k=1}^{\infty} h_{k}(xH) { \widehat{\overline{g_k}}(\xi) ^*}, ~~(xH, \xi) \in G / H \times \widehat{G / H}.
			$$
			Then, for all \(f \in L^{p_{1}}(G/H)\)
			\begin{align*}
			\left(T f\right)(x H)
			&=\sum_{[\xi] \in \widehat{G/H}} d_{\xi} \operatorname{Tr}(\Gamma_{\xi}(x H) \sigma_T(xH, \xi) \hat{f}(\xi))\\
			&=\sum_{[\xi] \in \widehat{G/H}} d_{\xi} \sum_{m,n=1}^{d_{\xi}}(\Gamma_{\xi}(x H) \sigma_T(xH, \xi))_{nm} \hat{f}(\xi)_{mn}\\
			&=\sum_{[\xi] \in \widehat{G/H}} d_{\xi} \sum_{m,n=1}^{d_{\xi}} \left( \sum_{k=1}^{\infty} h_{k}(xH)  { \widehat{\overline{g_k}}(\xi)^*}_{nm} \right ) \hat{f}(\xi)_{mn}\\
			&=\sum_{[\xi] \in \widehat{G/H}} d_{\xi} \sum_{m,n=1}^{d_{\xi}} \left( \sum_{k=1}^{\infty} h_{k}(xH)  \overline{ \widehat{\overline{g_k}}(\xi)_{mn} } \right ) \hat{f}(\xi)_{mn}\\
			&=\sum_{[\xi] \in \widehat{G/H}} d_{\xi} \sum_{m,n=1}^{d_{\xi}}  \sum_{k=1}^{\infty} h_{k}(xH) \left( \int_{G/H} g_{k}(wH)  \Gamma_{\xi}(w H)_{nm}~  d\mu(wH)  \right ) \hat{f}(\xi)_{mn}\\
			&=   \int_{G/H} \left( \sum_{[\xi] \in \widehat{G/H}} d_{\xi} \sum_{m,n=1}^{d_{\xi}}    \Gamma_{\xi}(w H)_{nm}  \hat{f}(\xi)_{mn}  \right) \sum_{k=1}^{\infty} h_{k}(xH)  g_{k}(wH) ~  d\mu(wH)  \\
			&=   \int_{G/H} \left( \sum_{[\xi] \in \widehat{G/H}} d_{\xi}   \operatorname{Tr}(  \Gamma_{\xi}(w H)  \hat{f}(\xi)) \right)     \sum_{k=1}^{\infty} h_{k}(xH)  g_{k}(wH) ~  d\mu(wH)  \\
			&=   \int_{G/H}\left (   \sum_{k=1}^{\infty} h_{k}(xH)  g_{k}(wH)\right ) f(wH)  ~  d\mu(wH) 
			\end{align*}
			for all \(xH \in G/H .\) Therefore by Theorem \ref{11}, it follows that $T$   is  $r$-nuclear.
		\end{pf}

		In the next theorem, we will give another characterization of $r$-nuclear pseudo-differential operators from  $L^{p_{1}}(G/H)$ into $L^{p_{2}}(G/H)$ in order to find  trace of $r$-nuclear operators from $L^{p}(G/H)$ into $L^{p}(G/H)$.
		
		%In order to find trace of a $r$-nuclear pseudo-differential operator from $L^{p_{1}}(G/H)$ into $L^{p_{2}}(G/H)$, The following theorem is another characterization of $r$-nuclear pseudo-differential operators from $L^{p_{1}}(G/H)$ into $L^{p_{2}}(G/H)$. To calculate trace of a $r$-nuclear operator, we need this theorem.
		
		\begin{thm}\label{22}
			Let $0<r \leq 1$ and let \(T : L^{p_{1}}(G/H) \rightarrow L^{p_{2}}(G/H), 1 \leq p_{1}, p_{2}<\infty,\)  be a continuous linear operator with the matrix-valued symbol \(\sigma_T\) on \(G/H \times \widehat{G/H}\). Then the pseudo-differential operator \(T \)
			is  $r$-nuclear   if and only if there exist sequences
			\(\left\{g_{k}\right\}_{k=1}^{\infty}\) in \(L^{p_{1}'}(G/H)\) and \(\left\{h_{k}\right\}_{k=1}^{\infty}\) in \(L^{p_{2}}(G/H)\) such that
			$$\sum_{k=1}^{\infty}\left\|g_{k}\right\|_{ L^{p_{1}^{\prime}}(G/H)}^r\left\|h_{k}\right\|_{ L^{p_{2}}(G/H)}^r<\infty$$
			and 
			$$
			\sum_{[\xi] \in \widehat{G/H}} d_{\xi} \operatorname{Tr}(\Gamma_{\xi}(x H) \sigma_T(xH, \xi) \Gamma_{\xi}(y H)^* )=\sum_{k=1}^{\infty} h_{k}(xH){g_k}(yH).
			$$
		\end{thm}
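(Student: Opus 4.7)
The plan is to reduce the assertion to Theorem \ref{11} by identifying the left-hand side of the claimed equality as the integral kernel of $T$. Indeed, the computation performed while proving Theorem \ref{110} (and repeated at the start of the proof of Theorem \ref{55}) shows that any pseudo-differential operator $T = \operatorname{Op}(\sigma_T)$ is an integral operator
$$Tf(xH) = \int_{G/H} K(xH, yH)\, f(yH)\, d\mu(yH), \qquad K(xH, yH) := \sum_{[\xi] \in \widehat{G/H}} d_\xi \operatorname{Tr}\bigl(\Gamma_\xi(xH)\sigma_T(xH,\xi)\Gamma_\xi(yH)^*\bigr).$$
Hence the theorem is equivalent to the statement that $T$ is $r$-nuclear if and only if $K$ admits a decomposition $K(xH, yH) = \sum_k h_k(xH) g_k(yH)$ with $\sum_k \|g_k\|_{L^{p_1'}(G/H)}^r \|h_k\|_{L^{p_2}(G/H)}^r < \infty$, which is exactly the content of Theorem \ref{11} applied to the $\sigma$-finite measure space $(G/H, \mu)$.

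For the forward direction I would start from an $r$-nuclear $T$, invoke Theorem \ref{11} to obtain sequences $\{g_k\} \subset L^{p_1'}(G/H)$ and $\{h_k\} \subset L^{p_2}(G/H)$ with the required summability together with an integral kernel of the form $\sum_k h_k(xH) g_k(yH)$, and then equate this with the spectral expression for $K(xH,yH)$ above by uniqueness of the integral kernel of $T$. For the converse I would run the argument in reverse: substituting the assumed identity into the integral representation writes $T$ in precisely the form required by Theorem \ref{11}, whence $r$-nuclearity follows.

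The only step that demands genuine care is this uniqueness-of-kernel identification, since a priori the two expressions for $K$ are only equal in the distributional sense pairing against $L^{p_1}$ functions. This subtlety can be handled exactly as in the proof of Theorem \ref{55}: testing both expressions against the $H$-matrix coefficients $\Gamma_\xi(\cdot)_{nm}$ and using their orthogonality relations forces agreement of the matrix entries of $\sigma_T(xH,\xi)T_H^{\xi}$, and hence a.e.\ agreement of the two kernel formulas. Beyond this bookkeeping, no new analytic ingredients are needed: the result is essentially Theorem \ref{11} translated into the language of matrix-valued symbols via the global quantization \eqref{vishquant}.
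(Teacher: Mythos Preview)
Your proposal is correct and follows essentially the same route as the paper: both directions ultimately rest on Theorem~\ref{11} together with the identification of the left-hand side as the integral kernel of $T$, with the only delicate point being kernel uniqueness, which you correctly propose to settle via the orthogonality of the $H$-matrix coefficients. The paper organizes the forward direction slightly differently---it first passes through Theorem~\ref{55} to obtain the symbol decomposition $(\Gamma_\xi(xH)\sigma_T(xH,\xi))_{nm}=\sum_k h_k(xH)\overline{(\widehat{\overline{g_k}}(\xi))_{mn}}$ and then recovers $g_k(yH)$ by summing against $\overline{\Gamma_\xi(yH)_{nm}}$ and applying Fourier inversion---but this is just your kernel-uniqueness step unpacked, so the underlying argument is the same.
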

		
		\begin{pf}
			Suppose that \(T : L^{p_{1}}(G/H) \rightarrow L^{p_{2}}(G/H)\) is a $r$-nuclear operator for $1\leq p_1, p_2 <\infty$. Then by
			Theorem \ref{55}, there exist sequences \(\left\{g_{k}\right\}_{k=1}^{\infty}\) in \(L^{p_{1}^{\prime}}(G/H)\) and \(\left\{h_{k}\right\}_{k=1}^{\infty}\) in \(L^{p_{2}}(G/H)\)
			such that
			$$
			\sum_{k=1}^{\infty}\left\|g_{k}\right\|_{ L^{p_{1}^{\prime}}(G/H)}^r\left\|h_{k}\right\|_{ L^{p_{2}}(G/H)}^r<\infty
			$$
			and $$(\Gamma_{\xi}(x H) \sigma_T(xH, \xi))_{nm}= \sum_{k=1}^{\infty} h_{k}(xH)  \overline{\left ( \widehat{\overline{g_k}}(\xi)\right )}_{mn},~ (xH, \xi ) \in G / H \times \widehat{G / H},$$
			for all $n, m$ with   $1 \leq  n, m \leq d_{\xi}$. 
			
			Let  $yH\in  G/H$. Then
			\begin{align*}
			(\Gamma_{\xi}(x H) \sigma_T(xH, \xi))_{nm} \overline{\Gamma_{\xi}(y H)}_{nm} &= \sum_{k=1}^{\infty} h_{k}(xH)  \overline{\left ( \widehat{\overline{g_k}}(\xi)\right )}_{mn} \overline{\Gamma_{\xi}(y H)}_{nm} \\
			&= \int_{G/H} {\Gamma_{\xi}(z H)_{nm}} \overline{\Gamma_{\xi}(y H)}_{nm} \sum_{k=1}^{\infty} h_{k}(xH){g_k}(zH)  d\mu(zH).
			\end{align*}
			So we have  
			\begin{align*}
			&\sum_{m,n=1}^{d_{\xi}} (\Gamma_{\xi}(x H) \sigma_T(xH, \xi))_{nm} \overline{\Gamma_{\xi}(y H)}_{nm}\\
			&= \int_{G/H}\left(  \sum_{m,n=1}^{d_{\xi}} {\Gamma_{\xi}(z H)}_{nm} \overline{\Gamma_{\xi}(y H)}_{nm}\right)  \sum_{k=1}^{\infty} h_{k}(xH){g_k}(zH)  d\mu(zH).
			\end{align*}
			Therefore,  for all $xH,\, yH \in G/H$, we get
			\begin{align*}
			&\sum_{[\xi] \in \widehat{G/H}} d_{\xi} \operatorname{Tr}(\Gamma_{\xi}(x H) \sigma_T(xH, \xi){\Gamma_{\xi}(y H)^*})\\
			&= \int_{G/H} \left( \sum_{[\xi] \in \widehat{G/H}} d_{\xi} \operatorname{Tr}\left ( {\Gamma_{\xi}(z H)} {\Gamma_{\xi}(y H)^*}\right )\right )  \sum_{k=1}^{\infty} h_{k}(xH){g_k}(zH)  d\mu(zH) \\ &= \int_{G/H} \left( \sum_{[\xi] \in \widehat{G/H}} d_{\xi} \overline{\operatorname{Tr}\left ( {\Gamma_{\xi}(y H)} {\Gamma_{\xi}(z H)^*}\right )}\right )   \sum_{k=1}^{\infty} h_{k}(xH){g_k}(zH)  d\mu(zH) \\ &= \sum_{k=1}^{\infty} h_{k}(xH) \int_{G/H} \left( \sum_{[\xi] \in \widehat{G/H}} d_{\xi} \overline{\operatorname{Tr}\left ( {\Gamma_{\xi}(y H)} {\Gamma_{\xi}(z H)^*}\right )} {g_k}(zH)\right )\\ &= \sum_{k=1}^{\infty} h_{k}(xH) \int_{G/H} \left( \sum_{[\xi] \in \widehat{G/H}} d_{\xi} \overline{\operatorname{Tr}\left ( {\Gamma_{\xi}(y H)} {\Gamma_{\xi}(z H)^*} \overline{ {g_k}(zH)}\right )}\right ) \\ &= \sum_{k=1}^{\infty} h_{k}(xH) \overline{\left( \sum_{[\xi] \in \widehat{G/H}} d_{\xi} \operatorname{Tr}\left ( {\Gamma_{\xi}(y H)} \widehat{\overline{g}}(\xi) \right )\right )} = \sum_{k=1}^{\infty} h_{k}(xH) \overline{\left( \overline{g(yH)}\right )} \\ &= \sum_{k=1}^{\infty} h_{k}(xH) g_k(yH)
			\end{align*}
			for all $xH, yH$ in $G/H.$
			
			Conversely, let \(\left\{g_{k}\right\}_{k=1}^{\infty}\) and \(\left\{h_{k}\right\}_{k=1}^{\infty}\) be sequences in
			\(L^{p_{1}^{\prime}}(G/H)\) and \(L^{p_{2}}(G/H)\)  such that
			$$
			\sum_{k=1}^{\infty}\left\|g_{k}\right\|_{ L^{p_{1}^{\prime}}(G/H)}^r\left\|h_{k}\right\|_{ L^{p_{2}}(G/H)}^r<\infty
			$$
			and for all \(xH\) and \(yH\) in \(G/H,\)
			$$
			\sum_{[\xi] \in \widehat{G/H}} d_{\xi} \operatorname{Tr}(\Gamma_{\xi}(x H) \sigma(xH, \xi) \Gamma_{\xi}(y H)^* )=\sum_{k=1}^{\infty} h_{k}(xH){g_k}(yH).
			$$
			Then, for all $f\in  L^{p_1} (G/H)$
			\begin{align*}
			\left(T_{\sigma} f\right)(x H) &=\sum_{[\xi] \in \widehat{G/H}} d_{\xi} \operatorname{Tr}(\Gamma_{\xi}(x H) \sigma(xH, \xi) \hat{f}(\xi))\\
			&=\sum_{[\xi] \in \widehat{G/H}} d_{\xi} \sum_{m,n=1}^{d_{\xi}}(\Gamma_{\xi}(x H) \sigma(xH, \xi))_{mn} \hat{f}(\xi)_{nm}\\
			&=\int_{G/H} \left( \sum_{[\xi] \in \widehat{G/H}} d_{\xi}  \sum_{m,n=1}^{d_{\xi}}(\Gamma_{\xi}(x H) \sigma(xH, \xi))_{mn} \overline{\Gamma_{\xi}(yH)_{mn}} \right) f(yH)~d\mu(yH)\\
			&=\int_{G/H} \left( \sum_{[\xi] \in \widehat{G/H}} d_{\xi} \operatorname{Tr}(\Gamma_{\xi}(x H) \sigma(xH, \xi) \Gamma_{\xi}(yH)^* )\right ) f(yH)~d\mu(yH)\\
			&=\int_{G/H} \left(\sum_{k=1}^{\infty} h_{k}(xH){g_k}(yH)\right ) f(yH)~d\mu(yH)\\
			\end{align*}
			for all $xH\in G/H.$  This completes the proof.
		\end{pf}

		An immediate consequence of Theorem \ref{22} gives the trace of a $r$-nuclear pseudo-differential operator on $L^p(G/H)$ for $1 \leq  p < \infty$. Indeed, we have the following result.
		\begin{cor}\label{500}
			Let $0<r \leq 1$ and let \(T : L^{p}(G/H) \rightarrow L^{p}(G/H), 1 \leq p <\infty,\)  be a $r$-nuclear operator with the matrix-valued symbol \(\sigma_T\) on \(G/H \times \widehat{G/H}\).
			Then the nuclear trace  of \(T\) is given by
			$$
			\operatorname{Tr}\left(T\right)= \int_{G/H}\sum_{[\xi] \in \widehat{G/H}} d_{\xi} \operatorname{Tr}( T_{H}^{\xi} \sigma_T(xH, \xi)) d \mu(xH).
			$$
		\end{cor}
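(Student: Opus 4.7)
The plan is to combine Theorem \ref{22} with the integral-kernel trace formula \eqref{tr} and simplify the diagonal term using the basic identities satisfied by $\Gamma_\xi$ and $T_H^\xi$. Since $T$ is $r$-nuclear from $L^p(G/H)$ to itself, Theorem \ref{22} produces sequences $\{g_k\}\subset L^{p'}(G/H)$, $\{h_k\}\subset L^p(G/H)$ with $\sum_k \|g_k\|_{L^{p'}}^{r}\|h_k\|_{L^p}^{r}<\infty$ such that the integral kernel of $T$ is
\[
K(xH,yH)=\sum_{[\xi]\in\widehat{G/H}} d_\xi\,\operatorname{Tr}\!\bigl(\Gamma_\xi(xH)\sigma_T(xH,\xi)\Gamma_\xi(yH)^{*}\bigr)=\sum_{k=1}^{\infty} h_k(xH)\,g_k(yH).
\]
By \eqref{tr}, the nuclear trace is $\operatorname{Tr}(T)=\int_{G/H} K(xH,xH)\,d\mu(xH)$, and the summability above justifies substitution $y=x$ and all forthcoming interchanges of sum and integral.

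Next I would evaluate the diagonal $K(xH,xH)$ via the symbolic representation. Using cyclicity of the (finite-dimensional) trace,
\[
\operatorname{Tr}\!\bigl(\Gamma_\xi(xH)\sigma_T(xH,\xi)\Gamma_\xi(xH)^{*}\bigr)
=\operatorname{Tr}\!\bigl(\Gamma_\xi(xH)^{*}\Gamma_\xi(xH)\,\sigma_T(xH,\xi)\bigr).
\]
Now $\Gamma_\xi(xH)=\xi(x)T_H^\xi$ and $\xi(x)$ is unitary, so $\Gamma_\xi(xH)^{*}\Gamma_\xi(xH)=(T_H^\xi)^{*}T_H^\xi$. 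As recorded in Section 2, $T_H^\xi$ is a self-adjoint orthogonal projection, hence $(T_H^\xi)^{*}T_H^\xi=(T_H^\xi)^2=T_H^\xi$. Consequently
\[
\operatorname{Tr}\!\bigl(\Gamma_\xi(xH)\sigma_T(xH,\xi)\Gamma_\xi(xH)^{*}\bigr)=\operatorname{Tr}\!\bigl(T_H^\xi\,\sigma_T(xH,\xi)\bigr).
\]

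Substituting back and integrating over $G/H$ gives the claimed identity
\[
\operatorname{Tr}(T)=\int_{G/H}\sum_{[\xi]\in\widehat{G/H}}d_\xi\,\operatorname{Tr}\!\bigl(T_H^\xi\sigma_T(xH,\xi)\bigr)\,d\mu(xH).
\]
There is no real obstacle here beyond bookkeeping: the only nontrivial point is recognising $(T_H^\xi)^{*}T_H^\xi=T_H^\xi$ from the projection property, after which the proof reduces to Theorem \ref{22} plus \eqref{tr}. The absolute convergence needed to justify swapping $\sum_{[\xi]}$, $\sum_k$, and $\int_{G/H}$ follows from the nuclear decomposition, as noted right before the statement of \eqref{tr} (since $\int|K(x,y)|d\mu(y)\le\sum_k\|g_k\|\|h_k\|<\infty$).
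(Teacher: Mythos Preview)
Your argument is correct and follows exactly the paper's own proof: invoke Theorem \ref{22} together with the trace formula \eqref{tr}, then simplify $\operatorname{Tr}(\Gamma_\xi(xH)\sigma_T(xH,\xi)\Gamma_\xi(xH)^{*})$ to $\operatorname{Tr}(T_H^\xi\sigma_T(xH,\xi))$ using that $T_H^\xi$ is an orthogonal projection. If anything, you have spelled out the projection step (cyclicity of trace, unitarity of $\xi(x)$, $(T_H^\xi)^{*}T_H^\xi=T_H^\xi$) more carefully than the paper does.
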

		\begin{pf}
			Using trace formula (\ref{tr}) and Theorem \ref{22} we have 
			\begin{align*} \operatorname{Tr}\left(T\right) 
			&=\int_{G/H} \sum_{k=1}^{\infty} h_{k}(xH) g_{k}(xH) d \mu(xH) \\ 
			&=\int_{G/H} \sum_{[\xi] \in \widehat{G/H}} d_{\xi} \operatorname{Tr}(\Gamma_{\xi}(x H) \sigma_T(xH, \xi) \Gamma_{\xi}(x H)^* ).
			\end{align*}
			Since $T_{H}^{\xi}$ is an orthogonal projection therefore,
			\begin{align*} \operatorname{Tr}\left(T\right) 
			&=\int_{G/H}  \sum_{[\xi] \in \widehat{G/H}} d_{\xi} \operatorname{Tr}( T_{H}^{\xi}\sigma_T(xH, \xi)) d \mu(xH).
			\end{align*} \end{pf}

		\section{Adjoint and product of nuclear pseudo-differential operators}
		In this section we give a  formula for the symbols of the adjoints of $r$-nuclear pseudo-differential operators from $L^{p_1}(G/H)$ into $L^{p_2}(G/H)$ for $1 \leq p_1, p_2 <\infty$,
		where $G$ is a compact Hausdorff group and $H$ be a closed subgroup of $G.$

		\begin{thm}\label{66}
			Let $0<r \leq 1$ and let \(T : L^{p_{1}}(G/H) \rightarrow L^{p_{2}}(G/H), 1 \leq p_{1}, p_{2}<\infty,\)  be a continuous linear operator with the matrix-valued symbol \(\sigma_T\) on \(G/H \times \widehat{G/H}\). Then the adjoint $T^*$ of $T$  is also a $r$-nuclear operator from $L^{p_{2}'}(G/H)$ into $ L^{p_{1}'}(G/H)$ with symbol $\tau$ given by 
			$$T_{H}^{\xi} \tau(xH, \xi) ={\xi}(x )^* \sum_{k=1}^{\infty} \overline{g_{k}}(xH){ \widehat{{h_k}}(\xi)^*}, ~~ (xH, \xi) \in G / H \times \widehat{G / H},
			$$ where \(\left\{g_{k}\right\}_{k=1}^{\infty}\) and \(\left\{h_{k}\right\}_{k=1}^{\infty}\)  are two sequences in \(L^{p_{1}^{\prime}}(G/H)\) and \(L^{p_{2}}(G/H)\) respectively  such that
			$$
			\sum_{k=1}^{\infty}\left\|h_{k}\right\|_{L^{p_{2}}(G/H)}^r\left\|g_{k}\right\|_{L^{{p_{1}}^{\prime}}(G/H)}^r<\infty.
			$$
		\end{thm}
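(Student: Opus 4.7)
The plan is to reduce the theorem to Theorem \ref{55} applied to $T^{*}$. Since $T$ is $r$-nuclear, Theorem \ref{11} produces sequences $\{g_k\}\subset L^{p_1'}(G/H)$ and $\{h_k\}\subset L^{p_2}(G/H)$ with $\sum_k\|g_k\|_{L^{p_1'}}^{r}\|h_k\|_{L^{p_2}}^{r}<\infty$ such that $T$ has integral kernel
\[
K(xH,wH)=\sum_{k=1}^{\infty}h_k(xH)\,g_k(wH).
\]

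First I would identify the kernel of $T^{*}$. Using the standard complex $L^{p}$--$L^{p'}$ pairing $\langle f,\varphi\rangle=\int f\,\overline{\varphi}\,d\mu$, the Banach adjoint $T^{*}:L^{p_2'}(G/H)\to L^{p_1'}(G/H)$ has kernel $K^{*}(xH,wH)=\overline{K(wH,xH)}=\sum_k \overline{g_k(xH)}\;\overline{h_k(wH)}$. Setting $\tilde h_k:=\overline{g_k}\in L^{p_1'}(G/H)$ and $\tilde g_k:=\overline{h_k}\in L^{p_2}(G/H)$, this is a kernel decomposition of $T^{*}$ of exactly the form required by Theorem \ref{11}, now with $(p_2',p_1')$ in place of $(p_1,p_2)$. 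Because the $L^{p}$ norm is invariant under complex conjugation, the summability constant equals $\sum_k \|g_k\|_{L^{p_1'}}^{r}\|h_k\|_{L^{p_2}}^{r}<\infty$, so $T^{*}$ is $r$-nuclear from $L^{p_2'}(G/H)$ into $L^{p_1'}(G/H)$.

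Next I would feed this decomposition into Theorem \ref{55} for $T^{*}$ to read off its symbol $\tau$. That theorem, transcribed for the pair $\{\tilde h_k,\tilde g_k\}$, gives
\[
T_{H}^{\xi}\tau(xH,\xi)=\xi(x)^{*}\sum_{k=1}^{\infty}\tilde h_k(xH)\,\widehat{\overline{\tilde g_k}}(\xi)^{*}.
\]
Substituting $\tilde h_k=\overline{g_k}$ and $\overline{\tilde g_k}=h_k$ produces exactly
\[
T_{H}^{\xi}\tau(xH,\xi)=\xi(x)^{*}\sum_{k=1}^{\infty}\overline{g_k}(xH)\,\widehat{h_k}(\xi)^{*},
\]
as claimed.

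The only real subtlety lies in bookkeeping for the complex duality: the adjoint on complex $L^{p}$ spaces is conjugate-linear, so conjugations land on both kernel factors of $K^{*}$, and it is precisely this placement that yields the asymmetric pattern $\overline{g_k}\cdot\widehat{h_k}^{\,*}$ (rather than, say, $g_k\cdot\widehat{\overline{h_k}}^{\,*}$) in the final formula. Once conventions are fixed, no additional estimate is required beyond those already supplied by Theorems \ref{11} and \ref{55}; the argument is a direct transfer with the exponents and factors permuted.
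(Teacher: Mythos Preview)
Your argument is correct and is in fact cleaner than the paper's. The paper proceeds by writing out the duality relation $\int (Tf)\overline{g}=\int f\,\overline{T^{*}g}$ with the quantization formulae for both $T$ and $T^{*}$, then tests this identity against matrix coefficients $f=\Gamma_{\gamma}(\cdot)_{ij}$ and $g=\Gamma_{\eta}(\cdot)_{pq}$ to obtain a relation between Fourier coefficients of $\Gamma_{\gamma}\sigma_T$ and $\Gamma_{\eta}\tau$; it then reconstructs $(\Gamma_{\eta}(xH)\tau(xH,\eta))_{pq}$ via Fourier inversion and a direct computation using the decomposition of $\sigma_T$ from Theorem~\ref{55}. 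By contrast, you bypass all of this by observing that the kernel of the adjoint is simply $\overline{K(wH,xH)}$, which immediately furnishes an $r$-nuclear decomposition of $T^{*}$ with the conjugated and swapped sequences, and then you invoke Theorem~\ref{55} a second time to read off $\tau$. The one point worth making explicit is that the proof of Theorem~\ref{55} actually shows that \emph{any} kernel decomposition $K=\sum_k h_k\,g_k$ satisfying the summability condition yields the stated symbol formula with those particular sequences (not merely that some sequences exist); this is what licenses your substitution of $\tilde h_k$ and $\tilde g_k$. Your route is shorter and more transparent; the paper's route has the minor advantage of being self-contained at the level of the adjoint relation and matrix coefficients, without appealing to the kernel form of the adjoint.
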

		\begin{pf}
			For $f \in  L^{p_{1}}(G/H)$ and $g\in L^{p_{2}'}(G/H)$, from the definition of the adjoint of an operator we have 
			$$\int_{G/H}\left( T f \right)(x H)\overline{g(xH)}~ d\mu(xH)= \int_{G/H} f(x H) \overline{\left( T^*g\right) (xH)}~ d\mu(xH).$$
			Therefore,
			\begin{align}\label{33}\nonumber
			&\int_{G/H}\Biggl(   \int_{G/H}   \sum_{[\xi] \in \widehat{G/H} } d_{\xi} \sum_{m,n=1}^{d_{\xi}}(\Gamma_{\xi}(x H) \sigma_T(xH, \xi))_{mn} \\
			&\quad \quad \quad  \times   \overline{\Gamma_{\xi}(yH)_{mn}}  f(yH)~d\mu(yH)   \Biggr) \overline{g(xH)}~ d\mu(xH)\\\nonumber
			&= \int_{G/H} f(x H) \overline{\Biggl ( \int_{G/H} \sum_{[\xi] \in\widehat{G/H}} d_{\xi}  \sum_{m,n=1}^{d_{\xi}}(\Gamma_{\xi}(x H) \tau(xH, \xi))_{mn}}\\\nonumber
			&\quad \quad \quad  \times \overline{\overline{\Gamma_{\xi}(yH)_{mn}}  g(yH)~d\mu(yH) \Biggr)}~ d\mu(xH).
			\end{align}
			Now, let $\gamma $
			and $\eta$ be elements in $\widehat{G/H}$. Then for $1 \leq i, j \leq d_{\gamma}$ and $1 \leq p, q \leq d_{\eta}$, 
			we define the functions  $f$ and $g$ on $G/H$ by
			$$f(xH)=\Gamma_{\gamma}(xH)_{ij}, \quad xH \in G/H$$ and 
			$$g(xH)=\Gamma_{\eta}(xH)_{pq}, \quad xH \in G/H.$$

			Therefore, from (\ref{33}) it folllows that
			\begin{align*}
			&\int_{G/H}   (\Gamma_{\gamma}(x H) \sigma_T(xH, \gamma))_{ij} \overline{\Gamma_{\eta}(xH)}_{pq}~ d\mu(xH)\\
			&= \int_{G/H}\Gamma_{\gamma}(xH)_{ij} \overline{  (\Gamma_{\eta}(x H) \tau(xH, \eta))}_{pq} ~ d\mu(xH)
			\end{align*}
			and so 
			\begin{align*}
			&\overline{\int_{G/H}   (\Gamma_{\gamma}(x H) \sigma_T(xH, \gamma))_{ij} \overline{\Gamma_{\eta}(xH)_{pq}}~ d\mu(xH)}\\
			&= \int_{G/H} {(\Gamma_{\eta}(x H) \tau(xH, \eta))_{pq} }  \overline{\Gamma_{\gamma}(xH)_{ij} }  ~ d\mu(xH).
			\end{align*}
			Thus,
			\begin{align}\label{44}
			\overline{(((\Gamma_{\gamma}(\cdot ) \sigma_T(\cdot , \gamma))_{ij})^{\wedge}     ({\eta}))_{qp}}
			=(({(\Gamma_{\eta}(\cdot ) \tau(\cdot , \eta))_{pq} })^{\wedge} (\gamma))_{ji} 
			\end{align}
			for $1 \leq i, j \leq d_{\gamma}$,  $1 \leq p, q \leq d_{\eta}$ and all $\gamma $
			and $\eta$  in $\widehat{G/H}$. Since  \(T : L^{p_{1}}(G/H) \rightarrow L^{p_{2}}(G/H)\) is $r$-nuclear,  by Theorem \ref{55}, there exist sequences \(\left\{g_{k}\right\}_{k=1}^{\infty}\) in \(L^{p_{1}^{\prime}}(G/H)\) and \(\left\{h_{k}\right\}_{k=1}^{\infty}\) in \(L^{p_{2}}(G/H)\) such that
			$$
			\sum_{k=1}^{\infty}\left\|h_{k}\right\|_{L^{p_{2}}(G/H)}^r\left\|g_{k}\right\|_{L^{{p_{1}}^{\prime}}(G/H)}^r<\infty
			$$
			and for all $(yH, \gamma ) \in G / H \times \widehat{G / H}$, 	$$(\Gamma_{\gamma}(y H) \sigma_T(yH, \gamma))_{ij}= \sum_{k=1}^{\infty} h_{k}(yH)  \overline{\left ( \widehat{\overline{g_k}}(\gamma)\right )_{ji}}.$$
			So, for all $(xH, \eta ) \in G / H \times \widehat{G / H}$
			\begin{align*}
			(\Gamma_{\eta}(x H) \tau(xH, \eta))_{pq} &=\sum_{[\gamma] \in \widehat{G/H}} d_{\gamma} \operatorname{Tr}[\Gamma_{\gamma}(x H) ((\Gamma_{\eta}(\cdot ) \tau(\cdot , \eta))_{pq} )^{\wedge} (\gamma) ]\\&=\sum_{[\gamma] \in \widehat{G/H}}  \sum_{i,j=1}^{d_{\gamma}}d_{\gamma} (\Gamma_{\gamma}(x H))_{ij} (((\Gamma_{\eta}(\cdot ) \tau(\cdot , \eta))_{pq} )^{\wedge} (\gamma))_{ji}.
			\end{align*}
			Therefore, for all  $(xH, \eta ) \in G / H \times \widehat{G / H}$, we get from (\ref{44})

			\begin{align*}
			& (\Gamma_{\eta}(x H) \tau(xH, \eta))_{pq} \\
			&=\sum_{[\gamma] \in \widehat{G/H}}  \sum_{i,j=1}^{d_{\gamma}}d_{\gamma} (\Gamma_{\gamma}(x H))_{ij} \overline{(((\Gamma_{\gamma}(\cdot ) \sigma_T(\cdot , \gamma))_{ij})^{\wedge}     ({\eta}))_{qp}}\\
			&=\sum_{[\gamma] \in \widehat{G/H}}  \sum_{i,j=1}^{d_{\gamma}}d_{\gamma} (\Gamma_{\gamma}(x H))_{ij} \int_{G/H} \overline{(\Gamma_{\gamma}(yH ) \sigma_T(yH , \gamma))_{ij}}     {\Gamma_\eta(yH)}_{pq} ~d\mu(yH)\\
			&=\sum_{[\gamma] \in \widehat{G/H}}  \sum_{i,j=1}^{d_{\gamma}}d_{\gamma} (\Gamma_{\gamma}(x H))_{ij} \int_{G/H} \sum_{k=1}^{\infty} \overline{h_{k}(yH)}  { ( \widehat{\overline{g_k}}(\gamma))_{ji}}   {\Gamma_\eta(yH)}_{pq} ~d\mu(yH)\\
			&= \sum_{k=1}^{\infty} \left (  \int_{G/H} \overline{h_{k}(yH)}   \Gamma_\eta(yH)_{pq}~d\mu(yH)\right)  \sum_{[\gamma] \in \widehat{G/H}} \sum_{i,j=1}^{d_{\gamma}}d_{\gamma} (\Gamma_{\gamma}(x H))_{ij} { ( \widehat{\overline{g_k}}(\gamma))_{ji}}  \\
			&= \sum_{k=1}^{\infty}  \overline{\widehat{h_{k}}(\eta)_{qp}  }   \sum_{[\gamma] \in \widehat{G/H}} \sum_{i,j=1}^{d_{\gamma}}d_{\gamma} (\Gamma_{\gamma}(x H))_{ij} { ( \widehat{\overline{g_k}}(\gamma))_{ji}}  \\
			&= \sum_{k=1}^{\infty}  \overline{\widehat{h_{k}}(\eta)_{qp}  }   \sum_{[\gamma] \in \widehat{G/H}} d_{\gamma} \operatorname{Tr} (\Gamma_{\gamma}(x H) { \widehat{\overline{g_k}}(\gamma)})  \\
			&= \sum_{k=1}^{\infty}  \overline{\widehat{h_{k}}(\eta)_{qp}  }  \overline{g_k}(xH) =\sum_{k=1}^{\infty}  {\widehat{h_{k}}(\eta)_{pq}^*  }  \overline{g_k}(xH) 
			\end{align*}
			for all $1\leq p, q\leq d_{\eta}$. 
			
			Therefore, for all $(xH, \eta ) \in G / H \times \widehat{G / H}$, we get
			\begin{align*}
			\Gamma_{\eta}(x H) \tau(xH, \eta)= \sum_{k=1}^{\infty}  {\widehat{h_{k}}(\eta)^*} ~ \overline{g_k}(xH)  
			\end{align*}
			and hence 
			\begin{align*}
			T_{H}^{\eta} \tau(xH, \eta)={\eta}(x )^*  \sum_{k=1}^{\infty}  {\widehat{h_{k}}(\eta)^*} ~ \overline{g_k}(xH).
			\end{align*}
		\end{pf}
		
		As an application of Theorem \ref{55} and Theorem \ref{66}, in the next corollary we   give a  criterion for the self-adjointness of $r$-nuclear pseudo-differential operators.
		\begin{cor}\label{501}
			Let $0<r \leq 1$ and let  \(T : L^{2}(G/H) \rightarrow L^{2}(G/H)\)  be a   $r$-nuclear continuous linear operator with the matrix-valued symbol \(\sigma_T\) on \(G/H \times \widehat{G/H}\). 
			Then $T$ is self-adjoint if and only if there exist sequences 	\(\left\{g_{k}\right\}_{k=1}^{\infty}\)  and \(\left\{h_{k}\right\}_{k=1}^{\infty}\) in \(L^{2}(G/H)\) such that
			$$\sum_{k=1}^{\infty}\left\|h_{k}\right\|_{L^{2}(G/H)}^r\left\|g_{k}\right\|_{L^{2}(G/H)}^r<\infty,$$
			$$ \sum_{k=1}^{\infty} h_{k}(xH) { \widehat{\overline{g_k}}(\xi) ^*}= \sum_{k=1}^{\infty}  {\widehat{h_{k}}(\xi)^*} ~ \overline{g_k}(xH),~~ (xH, \xi) \in G / H \times \widehat{G / H},$$
			and $$T_{H}^{\xi}  \sigma_T(xH, \xi)= {\xi}(x)^* \sum_{k=1}^{\infty} h_{k}(xH) { \widehat{\overline{g_k}}(\xi) ^*}, ~~(xH, \xi) \in G / H \times \widehat{G / H}.$$	
		\end{cor}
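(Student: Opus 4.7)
The plan is to read the corollary off directly from Theorem \ref{55} and Theorem \ref{66}, using the uniqueness of the matrix-valued symbol to translate the operator equation $T = T^*$ into an equation between symbols. Since $T : L^{2}(G/H) \to L^{2}(G/H)$ is $r$-nuclear, the ``only if'' direction of Theorem \ref{55} (taken with $p_1 = p_2 = 2$, so $p_1' = 2$) furnishes sequences $\{g_k\}, \{h_k\} \subset L^2(G/H)$ with $\sum_k \|g_k\|_{L^2}^r\|h_k\|_{L^2}^r < \infty$ and
$$T_{H}^{\xi}\sigma_T(xH, \xi) = \xi(x)^* \sum_{k=1}^{\infty} h_k(xH)\,\widehat{\overline{g_k}}(\xi)^*.$$
Specializing Theorem \ref{66} to $p_1 = p_2 = 2$, the adjoint $T^*$ is again an $r$-nuclear operator on $L^2(G/H)$, and its symbol $\tau$ is expressed in terms of the same two sequences by
$$T_{H}^{\xi}\tau(xH, \xi) = \xi(x)^* \sum_{k=1}^{\infty} \widehat{h_k}(\xi)^*\,\overline{g_k}(xH).$$

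For the forward implication, assume $T = T^*$. Since a continuous linear operator on $L^{2}(G/H)$ is determined by its matrix-valued symbol via the quantization formula \eqref{vishquant}, we must have $\sigma_T(xH, \xi) = \tau(xH, \xi)$ for every $(xH, \xi) \in G/H \times \widehat{G/H}$. Equating the two displays above and left-multiplying both sides by $\xi(x)$ cancels the factor $\xi(x)^*$ and delivers the pointwise identity
$$\sum_{k=1}^{\infty} h_k(xH)\,\widehat{\overline{g_k}}(\xi)^* = \sum_{k=1}^{\infty} \widehat{h_k}(\xi)^*\,\overline{g_k}(xH),$$
which, together with the stated formula for $T_H^\xi \sigma_T(xH,\xi)$, is exactly the asserted condition.

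Conversely, suppose sequences $\{g_k\}, \{h_k\}$ in $L^{2}(G/H)$ satisfying the three listed conditions are given. The ``if'' direction of Theorem \ref{55} applies verbatim to show that $T$ is $r$-nuclear. With $T$ now $r$-nuclear, Theorem \ref{66} produces the adjoint's symbol $\tau$ in the form above, and the hypothesized equality of the two series forces $\sigma_T = \tau$ throughout $G/H \times \widehat{G/H}$; invoking the uniqueness of the symbol one more time yields $T = T^*$. There is no substantive obstacle here — the work is really in Theorems \ref{55} and \ref{66} — and the only care needed is to note that the symbol-to-operator map is injective on continuous operators, so that matching symbols is equivalent to matching operators.
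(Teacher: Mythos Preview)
Your proof is correct and follows exactly the approach the paper intends: the corollary is stated immediately after Theorem~\ref{66} with the remark that it is ``an application of Theorem~\ref{55} and Theorem~\ref{66},'' and no separate proof is given. Your argument spells out precisely that application, comparing the symbol formula for $T$ from Theorem~\ref{55} with the symbol formula for $T^*$ from Theorem~\ref{66} and using that the quantization map determines $T_H^\xi\sigma_T$ uniquely from $T$.
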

		We can give another formula  for the adjoints  of $r$-nuclear operators in terms of symbols. Indeed, we have the following theorem.
		
		\begin{thm}
			Let $0<r \leq 1$. Let \(\sigma_T\) be a matrix-valued function on \(G/H \times \widehat{G/H}\) such that  the corresponding pseudo-differential operator \(T : L^{p_{1}}(G/H) \rightarrow L^{p_{2}}(G/H)\)
			is  $r$-nuclear for $1\leq p_{1}, p_{2}<\infty$.  Then the symbol   $\tau$  of the adjoint  $T^*: L^{p_{2}'}(G/H) \rightarrow L^{p_{1}'}(G/H)$ is given by 
			\begin{align*}
			T_{H}^{\xi} \tau(xH, \xi)= {\xi}(x)^* \sum_{[\eta] \in \widehat{G/H}} d_{\eta}  \int_{G/H}  \operatorname{Tr}[(\Gamma_{\eta}(y H) \sigma_T(yH, \eta) )^*\Gamma_{\eta}(x H) ]\Gamma_{\xi}(y H)d\mu(yH)
			\end{align*}
			which is eventually same as
			$$  T_{H}^{\xi} \tau(xH, \xi)= {\xi}(x)^* \sum_{[\eta] \in \widehat{G/H}} d_{\eta}  \left({\overline{\operatorname{Tr}(\sigma_T(\cdot, \eta) ^* \Gamma_{\eta}(\cdot ) ^*\Gamma_{\eta}(x H))}^{\wedge}(\xi)}\right )^*$$
			for all $(xH, \xi) \in G / H \times \widehat{G / H}.$
		\end{thm}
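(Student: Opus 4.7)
The plan is to extract the symbol of $T^*$ directly from the defining relation
$$T_H^\xi \tau(xH,\xi)=\xi(x)^*(T^*\Gamma_\xi)(xH)$$
by realizing $T^*$ as an integral operator whose kernel is obtained from the kernel of $T$ via the standard duality rule $K_{T^*}(xH,yH)=\overline{K_T(yH,xH)}$. Because $T$ is $r$-nuclear, Theorem \ref{11} (and the discussion following it) gives $T$ an honest integrable kernel, and combining this with the symbolic representation used in the proof of Theorem \ref{110} yields
$$K_T(xH,wH)=\sum_{[\eta]\in\widehat{G/H}}d_\eta\operatorname{Tr}\bigl(\Gamma_\eta(xH)\sigma_T(xH,\eta)\Gamma_\eta(wH)^*\bigr),$$
with absolute summability coming from the nuclear decomposition, which justifies all subsequent interchanges of sum and integral.

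Next I would compute $\overline{K_T(yH,xH)}$ by applying $\overline{\operatorname{Tr}(A)}=\operatorname{Tr}(A^*)$ to each term and then using cyclicity of the trace, which collapses to
$$\overline{K_T(yH,xH)}=\sum_{[\eta]}d_\eta\operatorname{Tr}\bigl((\Gamma_\eta(yH)\sigma_T(yH,\eta))^*\Gamma_\eta(xH)\bigr).$$
Applying $T^*$ to the matrix-valued function $\Gamma_\xi$ entrywise gives
$$(T^*\Gamma_\xi)(xH)=\int_{G/H}\overline{K_T(yH,xH)}\,\Gamma_\xi(yH)\,d\mu(yH),$$
and multiplying on the left by $\xi(x)^*$ produces exactly the first formula claimed in the theorem.

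To obtain the second (Fourier-transform) expression I would translate the $y$-integral into a Fourier transform. Taking adjoints of the defining identity (2.2) for $\mathcal{F}_{G/H}$ shows that, for any scalar $\varphi\in L^1(G/H)$,
$$\int_{G/H}\varphi(yH)\,\Gamma_\xi(yH)\,d\mu(yH)=\bigl(\widehat{\overline{\varphi}}(\xi)\bigr)^*.$$
Specializing to $\varphi(yH)=\operatorname{Tr}\bigl(\sigma_T(yH,\eta)^*\Gamma_\eta(yH)^*\Gamma_\eta(xH)\bigr)$ converts the first formula into the second one termwise in $[\eta]$. The only delicate point is the bookkeeping with adjoints, trace cyclicity and complex conjugates; the rest is routine once the kernel formula for $K_{T^*}$ and the identity $\int\varphi\,\Gamma_\xi\,d\mu=\widehat{\overline{\varphi}}(\xi)^*$ are in place, with nuclearity supplying the absolute convergence needed to swap the sum over $[\eta]$ with the $y$-integral.
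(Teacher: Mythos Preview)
Your argument is correct, but it follows a genuinely different route from the paper. The paper does not work with the kernel duality $K_{T^*}(xH,yH)=\overline{K_T(yH,xH)}$ at all; instead it invokes Theorem~\ref{55} to write $(\Gamma_\eta(yH)\sigma_T(yH,\eta))^*=\sum_k\overline{h_k(yH)}\,\widehat{\overline{g_k}}(\eta)$, substitutes this into the right-hand side of the claimed formula, integrates in $y$ to produce $\widehat{h_k}(\xi)^*$, applies Fourier inversion in $\eta$ to recover $\overline{g_k}(xH)$, and then recognizes the resulting expression $\sum_k\widehat{h_k}(\xi)^*\,\overline{g_k}(xH)$ as $\Gamma_\xi(xH)\tau(xH,\xi)$ by Theorem~\ref{66}. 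In other words, the paper verifies the integral formula by reducing it to the nuclear-decomposition formula for $\tau$ already obtained in Theorem~\ref{66}, rather than deriving it from first principles.

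Your kernel-based approach is more self-contained: it does not require Theorem~\ref{66} as an input, and it makes transparent why the formula holds (it is nothing but the conjugate-transpose kernel applied to $\Gamma_\xi$). The paper's approach, on the other hand, has the advantage of explicitly displaying the consistency between the two adjoint formulas (Theorem~\ref{66} and the present one), and its convergence issues are automatic because the sums over $k$ are finite-norm nuclear sums from the outset. In your version the one point that deserves a line of justification is the interchange of $\sum_{[\eta]}$ with $\int_{G/H}$; this is exactly what Theorem~\ref{22} supplies, since it identifies the symbolic kernel series with the absolutely summable nuclear kernel $\sum_k h_k(xH)g_k(yH)$.
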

		\begin{pf}
			Suppose that \(T : L^{p_{1}}(G/H) \rightarrow L^{p_{2}}(G/H)\) is $r$-nuclear operator for \(1 \leq p_{1}, p_{2}<\infty .\) Then by Theorem \ref{55} , there exist sequences \(\left\{g_{k}\right\}_{k=1}^{\infty}\) in \(L^{p_{1}^{\prime}}(G/H)\) and \(\left\{h_{k}\right\}_{k=1}^{\infty}\) in \(L^{p_{2}}(G/H)\) such that
			
			$$
			\sum_{k=1}^{\infty}\left\|g_{k}\right\|_{ L^{p_{1}^{\prime}}(G/H)}^r\left\|h_{k}\right\|_{ L^{p_{2}}(G/H)}^r<\infty
			$$
			and for all  $(yH, \eta) \in G / H \times \widehat{G / H}$ we have 
			$$\Gamma_{\eta}(y H) \sigma_T(yH, \eta) = \sum_{k=1}^{\infty} h_{k}(yH) { \widehat{\overline{g_k}}(\eta) ^*}$$
			or $$(\Gamma_{\eta}(y H) \sigma_T(yH, \eta) )^*= \sum_{k=1}^{\infty}\overline{h_{k}(yH)} { \widehat{\overline{g_k}}(\eta) }.$$
			Let $(xH, \xi) \in G / H \times \widehat{G / H}$. Then 
			\begin{align*}
			&\int_{G/H}  \operatorname{Tr}[(\Gamma_{\eta}(y H) \sigma_T(yH, \eta) )^*\Gamma_{\eta}(x H) ]\Gamma_{\xi}(y H)~d\mu(yH) \\
			&=\int_{G/H}   \operatorname{Tr}\left [ \sum_{k=1}^{\infty}\overline{h_{k}(yH)} { \widehat{\overline{g_k}}(\eta) } \Gamma_{\eta}(x H) \right ]\Gamma_{\xi}(y H)~d\mu(yH) \\
			&=\sum_{k=1}^{\infty} \operatorname{Tr}({ \widehat{\overline{g_k}}(\eta) } \Gamma_{\eta}(x H))    \int_{G/H} \overline{h_{k}(yH)} \Gamma_{\xi}(y H)~d\mu(yH) \\
			&=\sum_{k=1}^{\infty}  \widehat{{h_k}}(\xi)^* \operatorname{Tr}[{ \widehat{\overline{g_k}}(\eta) } \Gamma_{\eta}(x H)].
			\end{align*}
			Thus by  Theorem  \ref{66},
			\begin{align*}
			&\sum_{[\eta] \in \widehat{G/H}} d_{\eta}  \int_{G/H}  \operatorname{Tr}[(\Gamma_{\eta}(y H) \sigma_T(yH, \eta) )^*\Gamma_{\eta}(x H) ]\Gamma_{\xi}(y H)~d\mu(yH) \\
			&= \sum_{[\eta] \in \widehat{G/H}} d_{\eta}  \left( \sum_{k=1}^{\infty}  \widehat{{h_k}}(\xi)^* \operatorname{Tr}[{ \widehat{\overline{g_k}}(\eta) } \Gamma_{\eta}(x H)]\right)\\
			&= \sum_{k=1}^{\infty}  \widehat{{h_k}}(\xi)^* \sum_{[\eta] \in \widehat{G/H}} d_{\eta}  \operatorname{Tr}[{ \widehat{\overline{g_k}}(\eta) } \Gamma_{\eta}(x H)]\\
			&= \sum_{k=1}^{\infty}  \widehat{{h_k}}(\xi)^* { {\overline{g_k}}}(x H) =\Gamma_{\xi}(x H) \tau(xH, \xi)
			\end{align*}
			for all $(xH, \xi) \in G / H \times \widehat{G / H}$.
		\end{pf}
		
		Another criterion for the self-adjointness of $r$-nuclear pseudo-differential operators on  homogeneous space of compact groups is as follows.
		\begin{cor}
			Let $0<r \leq 1$. 	Let \(\sigma_T\) be a matrix-valued function on \(G/H \times \widehat{G/H}\) such that  \(T : L^{2}(G/H) \rightarrow L^{2}(G/H)\) is $r$-nuclear. Then \(T : L^{2}(G/H) \rightarrow L^{2}(G/H)\) is self-adjoint if and only if 
			$$  T_{H}^{\xi} \sigma_T(xH, \xi)= \xi(x)^* \sum_{[\eta] \in \widehat{G/H}} d_{\eta}  \left({\overline{\operatorname{Tr}(\sigma_T(\cdot, \eta) ^* \Gamma_{\eta}(\cdot )^*  \Gamma_{\eta}(x H))}^{\wedge}(\xi)}\right)^*$$
			for all $(xH, \xi) \in G / H \times \widehat{G / H}.$
		\end{cor}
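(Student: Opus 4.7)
The plan is to derive this criterion as an immediate consequence of the preceding theorem, which furnishes an explicit formula for the symbol $\tau$ of the adjoint $T^{*}$ of an $r$-nuclear pseudo-differential operator $T$. The key observation is that in the setting $p_1 = p_2 = 2$, we have $p_1' = p_2' = 2$, so the adjoint $T^{*}$ is again a continuous linear operator from $L^2(G/H)$ into $L^2(G/H)$ with a well-defined matrix-valued symbol $\tau$, and the previous theorem supplies the formula
\begin{align*}
T_{H}^{\xi} \tau(xH,\xi) = \xi(x)^{*} \sum_{[\eta] \in \widehat{G/H}} d_{\eta}\left(\overline{\operatorname{Tr}(\sigma_T(\cdot,\eta)^{*}\Gamma_{\eta}(\cdot)^{*}\Gamma_{\eta}(xH))}^{\wedge}(\xi)\right)^{*}.
\end{align*}

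First, I would recall that the matrix-valued global symbol of a continuous linear operator on $C(G/H)$ is uniquely determined by the operator, thanks to the defining relation $T_{H}^{\pi}\sigma_T(xH,\pi) = \pi(x)^{*}(T\Gamma_{\pi})(xH)$ introduced in Section~2. Consequently, two operators coincide if and only if their symbols coincide almost everywhere, and in particular $T = T^{*}$ if and only if $\sigma_T = \tau$.

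Second, I would use this equivalence in both directions. For the forward direction, assume $T$ is self-adjoint, so that $\sigma_T = \tau$; substituting $\tau = \sigma_T$ into the formula from the preceding theorem yields exactly the claimed identity. For the converse, assume the identity holds; then $\sigma_T$ and $\tau$ satisfy the same determining equation (namely, the formula from the previous theorem), and since that formula expresses the left-hand side as a specific functional of $\sigma_T$, it determines $\tau$ uniquely. Hence $\sigma_T = \tau$, which forces $T = T^{*}$.

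No technical obstacle is anticipated, since the corollary is essentially a book-keeping reformulation of the preceding theorem once one invokes the uniqueness of the global symbol. The only point requiring mild attention is to confirm that when $p_1 = p_2 = 2$, every ingredient (the $r$-nuclearity of $T$, the applicability of the adjoint formula, and the fact that both $\sigma_T$ and $\tau$ are symbols of operators on $L^{2}(G/H)$) is consistent, which is immediate from $p_1' = p_2' = 2$.
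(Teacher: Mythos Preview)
Your proposal is correct and matches the paper's approach: the corollary is stated in the paper without proof, being an immediate consequence of the preceding theorem (the explicit formula for the adjoint symbol) together with the uniqueness of the global symbol. Your observation that $p_1=p_2=2$ gives $p_1'=p_2'=2$, so that $T$ and $T^*$ both act on $L^2(G/H)$ and self-adjointness is equivalent to $\sigma_T=\tau$, is exactly the intended reasoning.
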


		Now, we show that the product of a nuclear pseudo-differential operator on $L^p(G/H)$ with a bounded operator  again a nuclear pseudo-differential operator on $L^p(G/H)$  for $1 \leq  p < \infty$, where $G$ is a compact (Hausdorff) group and $H$ is a closed subgroup of $G.$ We present a formula for the symbol of product operator. The main theorem of this section is stated below. 
		
		\begin{thm}\label{99}
			Let  \(T : L^{p}(G/H) \rightarrow L^{p}(G/H)\), $1\leq p<\infty,$ be a nuclear operator with a matrix valued symbol $\sigma_T$ and let $S:  L^{p}(G/H) \rightarrow L^{p}(G/H)$ be a bounded linear operator  with symbol $\sigma_S$. Then $ST:  L^{p}(G/H) \rightarrow L^{p}(G/H)$ is a nuclear operator with symbol $\lambda$ given by
			\begin{align*}
			T_{H}^{\xi}\lambda(xH, \xi)
			&={\xi}(x)^* \sum_{k=1}^{\infty} h_{k}'(xH)    \widehat{\overline{g_k}}(\xi) ^*
			\end{align*}
			for all $(xH, \xi)\in G/H\times \widehat{G / H}$, where $\left\{g_{k}\right\}_{k=1}^{\infty}$ and $\left\{h_{k}\right\}_{k=1}^{\infty}$  are two sequences in   \( L^{p^{\prime}}(G/H)\)
			and \( L^{p}(G/H)\) respectively such that
			$
			\sum_{k=1}^{\infty}\left\|g_{k}\right\|_{ L^{p^{\prime}}(G/H)}\left\|h_{k}\right\|_{{L^{p}(G/H)}}<\infty
			$ with $$h_{k}'(xH)  =\sum_{[\eta] \in \widehat{G/H}}   d_{\eta}  \operatorname{Tr}\left [ \Gamma_{\eta}(x H) \sigma_S(xH, \eta)  \widehat{h_{k}}(\eta)  \right ]~~xH\in G/H.  $$
			
		\end{thm}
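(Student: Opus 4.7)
The plan is to reduce to Theorem \ref{55} by producing an explicit nuclear decomposition of $ST$ and then reading off its symbol. Since $T$ is nuclear, Theorem \ref{55} (in fact its proof via Theorem \ref{11}) provides sequences $\{g_k\}\subset L^{p'}(G/H)$ and $\{h_k\}\subset L^{p}(G/H)$ with $\sum_k\|g_k\|_{L^{p'}}\|h_k\|_{L^p}<\infty$ such that for every $f\in L^p(G/H)$,
\begin{equation*}
Tf(xH)=\sum_{k=1}^{\infty} h_k(xH)\,\phi_k(f),\qquad \phi_k(f):=\int_{G/H} g_k(wH)\,f(wH)\,d\mu(wH),
\end{equation*}
and the series converges absolutely in $L^p(G/H)$ because $\sum_k \|h_k\|_{L^p}\,|\phi_k(f)|\le \|f\|_{L^p}\sum_k\|g_k\|_{L^{p'}}\|h_k\|_{L^p}<\infty$ by H\"older's inequality.

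Next I would exploit the boundedness of $S$. The absolute $L^p$-convergence above, together with continuity of $S$, allows me to interchange $S$ and the sum:
\begin{equation*}
(ST)f(xH)=S\!\left(\sum_{k=1}^{\infty} h_k\,\phi_k(f)\right)(xH)=\sum_{k=1}^{\infty} (Sh_k)(xH)\,\phi_k(f).
\end{equation*}
Setting $h_k':=Sh_k\in L^p(G/H)$, the sequences $\{g_k\}$ and $\{h_k'\}$ satisfy $\sum_k\|g_k\|_{L^{p'}}\|h_k'\|_{L^p}\le \|S\|_{\mathrm{op}}\sum_k\|g_k\|_{L^{p'}}\|h_k\|_{L^p}<\infty$. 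By Theorem \ref{11} this exhibits $ST$ as a nuclear operator on $L^p(G/H)$ with integral kernel $\sum_k h_k'(xH)g_k(wH)$. Then the converse direction of Theorem \ref{55} immediately yields the symbol identity
\begin{equation*}
T_H^{\xi}\lambda(xH,\xi)=\xi(x)^*\sum_{k=1}^{\infty} h_k'(xH)\,\widehat{\overline{g_k}}(\xi)^*,\qquad (xH,\xi)\in G/H\times\widehat{G/H}.
\end{equation*}

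It only remains to verify that $h_k'=Sh_k$ coincides with the expression stated in the theorem, which is a direct application of the global quantization formula \eqref{vishquant} applied to the operator $S$ with input function $h_k$:
\begin{equation*}
(Sh_k)(xH)=\sum_{[\eta]\in\widehat{G/H}} d_\eta\,\mathrm{Tr}\!\left[\Gamma_\eta(xH)\,\sigma_S(xH,\eta)\,\widehat{h_k}(\eta)\right].
\end{equation*}
The only point requiring care is the justification of the interchange $S(\sum_k h_k\phi_k(f))=\sum_k Sh_k\,\phi_k(f)$; I expect this to be the main (though routine) technical obstacle, and it is handled by the absolute $L^p$-convergence of the defining series combined with the continuity of $S$. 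Everything else is a bookkeeping application of the previously established characterizations.
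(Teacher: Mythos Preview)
Your proposal is correct and, in fact, slightly cleaner than the paper's own argument. The paper proceeds by a direct symbolic computation: it writes $(STf)(xH)=\sum_{[\eta]}d_\eta\operatorname{Tr}\bigl[\Gamma_\eta(xH)\sigma_S(xH,\eta)\widehat{Tf}(\eta)\bigr]$, expands $\widehat{Tf}(\eta)$ as an integral over $G/H$, substitutes the nuclear decomposition $\Gamma_\xi(yH)\sigma_T(yH,\xi)=\sum_k h_k(yH)\widehat{\overline{g_k}}(\xi)^*$ inside the trace, and then rearranges the resulting double sum over $[\eta]$ and $[\xi]$ until it takes the form $\sum_{[\xi]}d_\xi\operatorname{Tr}\bigl[\Gamma_\xi(xH)\lambda(xH,\xi)\hat f(\xi)\bigr]$, from which $\lambda$ is read off. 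Your route instead exploits the ideal property of nuclear operators at the abstract level: you push $S$ through the nuclear decomposition of $T$ to obtain directly a nuclear decomposition of $ST$ with $h_k$ replaced by $Sh_k$, invoke Theorem~\ref{55} once to extract the symbol, and only then recognise $Sh_k$ via the quantization formula~\eqref{vishquant}. The payoff of your approach is that nuclearity of $ST$ and the explicit norm bound $\sum_k\|g_k\|_{L^{p'}}\|h_k'\|_{L^p}\le\|S\|_{\mathrm{op}}\sum_k\|g_k\|_{L^{p'}}\|h_k\|_{L^p}$ come for free, whereas the paper's computation leaves the nuclearity of $ST$ implicit in the final identification. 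The paper's approach, on the other hand, stays entirely within the symbolic calculus and never needs to justify the interchange of $S$ with an infinite sum (though, as you note, this is routine). Both arguments end at the same place; yours is the more transparent of the two.
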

		
		\begin{pf}
			Since  \(T : L^{p}(G/H) \rightarrow L^{p}(G/H)\) is  a nuclear pseudo-differential operator for $1\leq p <\infty$, from Theorem \ref{55}, there exist  sequences  \(\left\{g_{k}\right\}_{k=1}^{\infty} \in L^{p^{\prime}}(G/H)\)
			and \(\left\{h_{k}\right\}_{k=1}^{\infty} \in L^{p}(G/H)\) such that
			$$
			\sum_{k=1}^{\infty}\left\|g_{k}\right\|_{ L^{p^{\prime}}(G/H)}\left\|h_{k}\right\|_{{L^{p}(G/H)}}<\infty
			$$ and 
			$$T_{H}^{\xi} \sigma_T(xH, \xi)= {\xi}(x)^*\sum_{k=1}^{\infty} h_{k}(xH) { \widehat{\overline{g_k}}(\xi) ^*}, ~~ (xH, \xi) \in G / H \times \widehat{G / H}.
			$$

			Let $f \in L^P(G/H)$. Then 
			\begin{align*}
			(ST f)(xH)
			&=\sum_{[\eta] \in \widehat{G/H}} d_{\eta} \operatorname{Tr}(\Gamma_{\eta}(x H) \sigma_S(xH, \eta) \widehat{Tf}(\eta))\\
			&=\sum_{[\eta] \in \widehat{G/H}} d_{\eta} \operatorname{Tr}\left [ \Gamma_{\eta}(x H)  \sigma_S(xH, \eta) \left( \int_{G/H} Tf(yH) \Gamma_{\eta}(yH)^* d\mu(yH)\right ) \right ]\\
			&=\sum_{[\eta] \in \widehat{G/H}} d_{\eta} \operatorname{Tr}\left [ \Gamma_{\eta}(x H) \sigma_S(xH, \eta)  \int_{G/H} \right.\\
			&\left. \quad\quad \quad\times \left( \sum_{[\xi] \in \widehat{G/H}} d_{\xi} \operatorname{Tr}(\Gamma_{\xi}(y H) \sigma_T(yH, \xi) \widehat{f}(\xi)) \right) \Gamma_{\eta}(yH)^*  d\mu(yH)\right ]\\
			\end{align*}
			for all $xH\in G/H$. Using the nuclearity of $T$, we have
			\begin{align*}
			(STf)(xH)
			&=\sum_{[\eta] \in \widehat{G/H}} d_{\eta} \operatorname{Tr}\left [ \Gamma_{\eta}(x H) \sigma_S(xH, \eta)  \int_{G/H}  \sum_{[\xi] \in \widehat{G/H}} d_{\xi}\right. \\
			&\quad \quad \quad  \times \left.  \operatorname{Tr}\left(\sum_{k=1}^{\infty} h_{k}(yH) { \widehat{\overline{g_k}}(\xi) ^*} \widehat{f}(\xi)\right)  \Gamma_{\eta}(yH)^*  d\mu(yH) \right ]\\
			&=\sum_{[\eta] \in \widehat{G/H}} d_{\eta} \operatorname{Tr}\left [ \Gamma_{\eta}(x H) \sigma_S(xH, \eta)   \sum_{[\xi] \in \widehat{G/H}}  \sum_{k=1}^{\infty} d_{\xi} \operatorname{Tr}\left(  { \widehat{\overline{g_k}}(\xi) ^*} \widehat{f}(\xi)\right)\right.\\
			&\quad \quad \quad  \times \left.  \int_{G/H} h_{k}(yH)  \Gamma_{\eta}(yH)^*d\mu(yH) \right]\\
			&=\sum_{[\eta] \in \widehat{G/H}} d_{\eta} \operatorname{Tr}\left [ \Gamma_{\eta}(x H) \sigma_S(xH, \eta) \sum_{[\xi] \in \widehat{G/H}}  \sum_{k=1}^{\infty} d_{\xi}  \widehat{h_{k}}(\eta) \operatorname{Tr}\left(  { \widehat{\overline{g_k}}(\xi) ^*} \widehat{f}(\xi)\right)   \right ]\\
			&=\sum_{[\eta] \in \widehat{G/H}}    \sum_{k=1}^{\infty}  \sum_{[\xi] \in \widehat{G/H}}  d_{\xi} d_{\eta} \operatorname{Tr}\left [ \Gamma_{\eta}(x H) \sigma_S(xH, \eta)   \widehat{h_{k}}(\eta)   \right ] \operatorname{Tr}\left(  { \widehat{\overline{g_k}}(\xi) ^*} \widehat{f}(\xi)\right) \\
			&=\sum_{[\xi] \in \widehat{G/H}}  d_{\xi}   \operatorname{Tr}\left(    \Gamma_{\xi}(xH)\lambda(xH, \xi)\widehat{f}(\xi)\right),
			\end{align*}	 
			where 
			\begin{align*}
			T_{H}^{\xi}\lambda(xH, \xi)&={\xi}(x)^*\sum_{k=1}^{\infty}  \sum_{[\eta] \in \widehat{G/H}}   d_{\eta}  \operatorname{Tr}\left [ \Gamma_{\eta}(x H) \sigma_S(xH, \eta)   \widehat{h_{k}}(\eta)   \right ] \widehat{\overline{g_k}}(\xi) ^*\\
			&={\xi}(x)^* \sum_{k=1}^{\infty} h_{k}'(xH)    \widehat{\overline{g_k}}(\xi) ^*
			\end{align*}
			for all $(xH, \xi)\in G/H\times \widehat{G / H}$, where  $$h_{k}'(xH)  =\sum_{[\eta] \in \widehat{G/H}}   d_{\eta}  \operatorname{Tr}\left [ \Gamma_{\eta}(x H) \sigma_S(xH, \eta)  \widehat{h_{k}}(\eta)  \right],  ~~xH \in G/H.$$
		\end{pf}
		
		\section{Applications to heat kernels} 
		
		In this section, we assume that $G$ is a compact Lie group and $H$ is a closed subgroup of $G.$  Let $\mathcal{L}_G$ be the Laplace-Beltrami operator (or the Casimir element of the universal enveloping algebra) on $G.$ For every $[\xi] \in \widehat{G},$ the matrix elements of $\xi$ are the eigenfunctions of $\mathcal{L}_G$ with same eigenvalue denoted by $-\lambda_{[\xi]}^2.$ Therefore, $$-\mathcal{L}_G \xi_{ij}= - \lambda_{[\xi]}^2 \xi_{ij}\,\,\,\, \text{for all}\, 1 \leq i,\,j \leq d_{\xi}.$$ 
		Let $-\mathcal{L}_{G/H}: C^\infty(G/H) \rightarrow C^\infty(G/H)$ be the differential operator on $G/H$ obtained by  $-\mathcal{L}_G$ acting on functions that are constant on cosets of $G,$ i.e., such that $-\widetilde{\mathcal{L}_{G/H}f}= -\mathcal{L}_G\widetilde{f}$ for $f \in C^\infty(G/H),$ where for $f \in C^\infty(G/H),\,\, \tilde{f} \in C^\infty(G)$ is the lifting of $f$ given by $\tilde{f}(x)=f(xH).$ The operator $-\mathcal{L}_{G/H}$ has the eigenspace $\Gamma_{\xi_{ij}}{(xH)}$ for $1 \leq i,\,j \leq d_\xi$ corresponding to the common eigenvalue $-\lambda_{[\xi]}^2.$ For more details on $-\mathcal{L}_{G/H}$ see \cite{NRT16}. We make use the symbol of the heat kernel $e^{-t\mathcal{L}_{G/H}}.$ Indeed, by taking in to account $\sigma_{e^{-t\mathcal{L}_{G/H}}}(xH, \xi)= e^{-t |\xi|^2}T_H^{\xi},$ where $|\xi|=\lambda_{[\xi]}^2,$ we have 
		\begin{align*}
		e^{-t\mathcal{L}_{G/H}}f(xH) &= \sum_{[\xi] \in \widehat{G/H}} d_\xi \text{Tr}( \Gamma_{\xi}(xH) \sigma_{e^{-t\mathcal{L}_{G/H}}}(xH, \xi) \widehat{f}(\xi) ) \\&= \sum_{[\xi] \in \widehat{G/H}} d_\xi \text{Tr}( \Gamma_{\xi}(xH) e^{-t \lambda_{[\xi]}^2}T_H^{\xi} \widehat{f}(\xi) ) \\&= \sum_{[\xi] \in \widehat{G/H}} d_\xi e^{-t \lambda_{[\xi]}^2} \text{Tr}( \Gamma_{\xi}(xH) \widehat{f}(\xi) ).
		\end{align*}
		
		Now, we show that the nuclearity of heat kernel on $L^p$-spaces.
		\begin{thm}
			Let $G$ be a compact Lie group and let $H$ be closed subgroup of $G$. Then the heat kernel  $e^{-t\mathcal{L}_{G/H}}:L^{p_1}(G/H) \rightarrow L^{p_2}(G/H)$ is nuclear for every $t>0$ and all $1 \leq p_1,\,p_2<\infty.$ Moreover, if $0<r \leq 1,$ then $e^{-t\mathcal{L}_{G/H}}:L^p(G/H) \rightarrow L^p(G/H)$ is $r$-nuclear operator for every $t>0$ and $1 \leq p <\infty.$ In particular, on each $L^p(G/H),$ we have the following nuclear trace formula 
			$$ \operatorname{Tr}\,(e^{-t\mathcal{L}_{G/H}})= \sum_{[\xi] \in \widehat{G/H}} d_{\xi} e^{-t\lambda_{[\xi]}^2}\, \operatorname{Tr}\,(T_H^{\xi}). $$
		\end{thm}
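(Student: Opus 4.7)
The plan is to reduce both assertions to results already established in the paper: the sufficient nuclearity criterion stated immediately before Theorem \ref{55}, and the trace formula of Corollary \ref{500}. The starting point is the symbol computation carried out just above the statement, namely
$$\sigma_{e^{-t\mathcal{L}_{G/H}}}(xH,\xi)=e^{-t\lambda_{[\xi]}^{2}}T_{H}^{\xi},$$
which is independent of the coset variable $xH$.

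First I would verify the hypothesis
$$\sum_{[\xi]\in\widehat{G/H}}d_{\xi}^{2+\frac{r}{\tilde{p}_1}}\bigl\|\,\|\sigma_{T}(\cdot,\xi)^{t}\|_{op(\ell^{\infty},\ell^{\infty})}\bigr\|_{L^{p_{2}}(G/H)}^{r}<\infty$$
of that sufficient criterion. Since $T_{H}^{\xi}$ is an orthogonal projection on $\mathbb{C}^{d_{\xi}}$, each of its matrix entries has modulus at most $1$, so a row-sum estimate gives $\|(T_{H}^{\xi})^{t}\|_{op(\ell^{\infty},\ell^{\infty})}\le d_{\xi}$. Because the symbol is constant in $xH$ and $\mu(G/H)=1$, the displayed sum reduces to
$$\sum_{[\xi]\in\widehat{G/H}}d_{\xi}^{2+\frac{r}{\tilde{p}_1}+r}\,e^{-tr\lambda_{[\xi]}^{2}}.$$

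Next I would invoke the spectral asymptotics on the compact Lie group $G$: by the Weyl dimension formula $d_{\xi}\le C(1+\lambda_{[\xi]})^{N}$ for some $N$ depending only on $\dim G$, and the counting function of the Laplace--Beltrami eigenvalues grows polynomially. Since $\widehat{G/H}\subset\widehat{G}$, the Gaussian factor $e^{-tr\lambda_{[\xi]}^{2}}$ absorbs every polynomial in $d_{\xi}$, and the series converges for each fixed $t>0$. The sufficient criterion then yields that $e^{-t\mathcal{L}_{G/H}}$ is $r$-nuclear from $L^{p_{1}}(G/H)$ into $L^{p_{2}}(G/H)$ for all $0<r\le 1$ and all $1\le p_{1},p_{2}<\infty$; the case $r=1$ recovers the nuclearity statement, and restricting to $p_{1}=p_{2}=p$ yields the $r$-nuclearity on $L^{p}(G/H)$.

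For the trace formula I would apply Corollary \ref{500} to $T=e^{-t\mathcal{L}_{G/H}}$. Since $(T_{H}^{\xi})^{2}=T_{H}^{\xi}$, we have $T_{H}^{\xi}\sigma_{T}(xH,\xi)=e^{-t\lambda_{[\xi]}^{2}}T_{H}^{\xi}$, so the integrand appearing in Corollary \ref{500} is constant in $xH$; interchanging sum and integral (justified by the absolute convergence established in the previous step) and using $\mu(G/H)=1$ produces the stated formula. The only non-routine point in the proof is the spectral estimate absorbing $d_{\xi}^{2+\frac{r}{\tilde{p}_1}+r}$ into $e^{-tr\lambda_{[\xi]}^{2}}$, which is a standard consequence of Weyl's law on $G$ and is precisely what forces the Lie-group hypothesis in this section.
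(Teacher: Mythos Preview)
Your argument is correct. The paper takes a marginally more direct route: instead of invoking the general sufficient criterion, it writes out the kernel decomposition explicitly, setting $h_{\xi,ij}(xH)=d_{\xi}e^{-t\lambda_{[\xi]}^{2}}\Gamma_{\xi}(xH)_{ij}$ and $g_{\xi,ij}(yH)=\overline{\xi(y)}_{ij}$, bounds these via Lemma~\ref{vis2.4} and the estimate $\|\Gamma_{\xi}(\cdot)\|_{op}\le 1$, and then appeals to Theorem~\ref{11} and the Weyl formula. This yields the slightly better exponent $d_{\xi}^{3-1/\tilde{q}_{1}}$ in the convergent series, whereas your crude row-sum bound $\|(T_{H}^{\xi})^{t}\|_{op(\ell^{\infty},\ell^{\infty})}\le d_{\xi}$ produces $d_{\xi}^{2+r/\tilde{p}_{1}+r}$; the discrepancy is of course irrelevant once the Gaussian factor $e^{-tr\lambda_{[\xi]}^{2}}$ is present. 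What your route buys is that it handles all $0<r\le 1$ and all $p_{1},p_{2}$ in one stroke, while the paper does $r=1$ by hand and then remarks that the $r$-nuclear case is similar. For the trace formula both you and the paper apply Corollary~\ref{500} in the same way, using $(T_{H}^{\xi})^{2}=T_{H}^{\xi}$ and $\mu(G/H)=1$.
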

		\begin{proof}
			The kernel of $e^{-t \mathcal{L}_{G/H}}$ is given by 
			\begin{align*}
			K_t(x,y)&= \sum_{[\xi] \in \widehat{G/H}} d_{\xi} e^{-t \lambda_{[\xi]}^2} \operatorname{Tr}(\Gamma_\xi(xH) \Gamma(yH)^*) \\ &= \sum_{[\xi] \in \widehat{G/H}} d_{\xi} e^{-t \lambda_{[\xi]}^2} \operatorname{Tr}(\Gamma_\xi(xH) \xi(y)^*)
			\end{align*}
			with 
			$$ \operatorname{Tr}(\Gamma_\xi(xH) \xi(y)^*)= \sum_{i,j}^{d_\xi} \Gamma_\xi(xH)_{ij}\, \overline{\xi(y)}_{ij}.$$
			We set 
			$$h_{\xi, ij}= d_\xi e^{-t \lambda_{[\xi]}^2} \Gamma_{\xi}(xH)_{ij}\,\,\,\,\,\,\,\, g_{\xi, ij}= \overline{\xi(y)}_{ij}.$$
			We $p_1'$ denotes the Lebesgue conjugate of $p_1$ and $\tilde{q}_1= \text{max}\,\{2, p_1'\}$ then by Lemma \ref{vis2.4} we get 
			$$ \|g_{\xi, ij}\|_{L^{p_1'}(G/H)}=\|\overline{\xi}_{ij}\|_{L^{p_1'}(G/H)} \leq \|\Gamma_{\xi}(\cdot)_{ij}\|_{L^{p_1'}(G/H)} \leq d_\xi^{-\frac{1}{\tilde{q}_1}}.$$
			
			Also, we have 
			\begin{align*}
			\|h_{\xi, ij}\|_{L^{p_2}(G/H)}&= \|d_\xi e^{-t \lambda_{[\xi]}^2} \Gamma_{\xi}(xH)_{ij}\|_{L^{p_2}(G/H)} \\&\leq \|d_\xi e^{-t \lambda_{[\xi]}^2} \|\Gamma_{\xi}(\cdot)\|_{op}\|_{L^{p_2}(G/H)} \leq d_\xi e^{-t \lambda_{[\xi]}^2}.
			\end{align*}
			Therefore, 
			$$\sum_{[\xi], i, j} \|g_{\xi, ij}(\cdot)\|_{L^{p_1'}(G/H)} \|h_{\xi, ij}(\cdot)\|_{L^{p_2}(G/H)} \leq \sum_{[\xi] \in \widehat{G/H}} d_{\xi}^2 d_\xi^{-\frac{1}{\tilde{q}_1}} e^{-t \lambda_{[\xi]}^2}<\infty,$$  
			the last convergence is follows from any of the Weyl formula, see, for example \cite{AR}. Therefore, $e^{-t \mathcal{L}_{G/H}}$ is a nuclear operator. Similarly one can prove $r$-nuclearity of   $e^{-t \mathcal{L}_{G/H}}.$
			
			By Corollary \ref{500} and by using the fact that measure $\mu$ on $G/H$ is normalized, the nuclear trace formula  of $e^{-t \mathcal{L}_{G/H}}$  given by
			
			$$\operatorname{Tr}(e^{-t \mathcal{L}_{G/H}})= \int_{G/H} \sum_{[\xi] \in \widehat{G/H}} d_{\xi} \,\operatorname{Tr}(e^{-t \lambda_{[\xi]}^2} T_H^\xi)= \sum_{[\xi] \in \widehat{G/H}} d_\xi e^{-t \lambda_{[\xi]}^2}\,\operatorname{Tr}(T_H^\xi).$$
		\end{proof}

		\section{Acknowledgement}
		
		Vishvesh Kumar is supported by Odysseus I Project (by FWO, Belgium) of Prof. Michael Ruzhansky. The authors thank Prof. Michael Ruzhansky for  providing many valuable comments. Vishvesh Kumar thanks  Duv\'{a}n Cardona for many fruitful discussions. Shyam Swarup Mondal thanks the Council of Scientific and Industrial Research, India, for providing financial support.  He also thanks  his supervisor Jitendriya Swain for his support and encouragement.

	\end{document}